 \newtheorem{thm}{Theorem}[section]
 \newtheorem{Theorem}[thm]{Theorem}
\newtheorem{Definition}[thm]{Definition}
\newtheorem{Lemma}[thm]{Lemma}
 \newtheorem{lem}[thm]{Lemma}
 \newtheorem{prop}[thm]{Proposition}
 \theoremstyle{definition}
 \theoremstyle{remark}
 \newtheorem{rem}[thm]{Remark}
 \numberwithin{equation}{section}
\newcommand{\PP}{\textbf{PP}}
\newcommand{\Ff}{\mathbb F}
\def\Tr{\operatorname{Tr}}
\begin{document}

\title[Permutation Polynomials of Degree $6$ or $7$]
{Permutation Polynomials of Degree $6$ or $7$ over Finite Fields of
Characteristic $2$}

\author[Li, Chandler and Xiang]{Jiyou Li$^{*}$, David B. Chandler,  Qing Xiang$^\dagger$}

\address{Mathematics Department, Shanghai JiaoTong University, Shanghai 200240,
P.R. China} \email{lijiyou@sjtu.edu.cn}

\address{Department of Mathematical Sciences, University of
Delaware,\linebreak
  Newark, DE 19716, USA}
\email{davidbchandler@gmail.com}

 \address{Department of Mathematical Sciences, University of
Delaware,\linebreak
  Newark, DE 19716, USA} \email{xiang@math.udel.edu}
\hyphenation{New-ark}
\thanks{$^*$Research supported in part by Science and Technology
Commission of Shanghai Municipality (Grant No. 09XD1402500).}
\thanks{$^{\dagger}$Research supported in part by NSF Grant DMS 0701049, and by the
Overseas Cooperation Fund (grant 10928101) of China.}

\keywords{Finite field, multinomial coefficient, permutation
polynomial.}


\begin{abstract}
In \cite{D1}, Dickson listed all permutation polynomials up to
degree $5$ over an arbitrary finite field, and all permutation
polynomials of degree $6$ over finite fields of odd characteristic.
The classification of degree $6$ permutation polynomials over finite
fields of characteristic $2$ was left incomplete. In this paper we
complete the classification of permutation polynomials of degree 6
over finite fields of characteristic $2$. In addition, all
permutation polynomials of degree 7 over finite fields of
characteristic 2 are classified.
\end{abstract}

\maketitle

\section{Introduction}
Let ${\Ff}_q$ be a field of $q$ elements, where $q=p^t$, $p$ is a
prime. A polynomial $f(x)\in {\Ff}_q[x]$ is called a {\it
permutation polynomial} (\textbf{PP}) of $\Ff_q$ if the induced
function $f:c\mapsto f(c)$ from $\Ff_q$ to itself is a permutation
of $\Ff_q$. Permutation polynomials have been studied extensively in
the literature, see \cite{ln, gm1, gm2, G, gm3} for surveys of known
results on \textbf{PP}s.

In \cite{D1}, Dickson determined all permutation polynomials of
degree $6$ over finite fields of odd characteristic. The
classification of \textbf{PP}s of degree $6$ over finite fields of
characteristic $2$ is much more complicated. Concerning \textbf{PP}s
of general degree $n\geq 1$, Carlitz conjectured in 1966 that if $q$
is odd, then for each even positive integer $n$, there is a constant
$C_n$ such that when $q>C_n$, there do not exist \textbf{PP}s of
degree $n$ over ${\Ff}_q$. Carlitz's conjecture was resolved in the
affirmative by Fried, Guralnick and Saxl in \cite{F}. Wan \cite{W}
generalized the Carlitz conjecture to the following stronger
conjecture: If $q>n^4$ and $\gcd(n,q-1)>1$, then there are no
\textbf{PP}s of degree $n$ over ${\Ff}_q$. The Carlitz-Wan
conjecture was proved by Lenstra; an elementary version of Lenstra's
proof was given by Cohen and Fried in 1995 \cite{C}. For more
details we refer the reader to \cite{F,G,W}.



We are concerned with \textbf{PP}s of degree $6$ or $7$ over
$\Ff_{2^t}$. First let us consider the degree 6 case. When $t$ is
even, we have $\gcd(6,2^t-1)=3>1$. It then follows from the
Carlitz-Wan conjecture (Lenstra's theorem) that there are no
\textbf{PP}s of degree $6$ if $2^t>6^4$. Therefore we have an almost
complete classification of degree 6 \textbf{PP}s over $\Ff_{2^t}$
when $t$ is even. Indeed the case where $t$ is even and $t\geq 6$
was completely settled by Mertens in 1993, as reported by Mullen
\cite{G}. When $t$ is odd, the Carlitz-Wan conjecture (Lenstra's
theorem) does not apply since in this case we always have
$\gcd(6,2^t-1)=1$.

In this paper, we determine all permutation polynomials of degree
$6$ over ${\Ff}_{2^t}$. This result completes the table of
permutation polynomials of degree $\leq6$ given by Dickson in
\cite{D1}. We include the proof of the classification of
\textbf{PP}s over $\Ff_{2^t}$ when $t$ is even. The proof when $t$
is odd is more complicated, but similar to the $t$ even case. In
addition, we classify all permutation polynomials of degree 7 over
$\Ff_{2^t}$.

\vspace{0.1in}

\noindent {\bf Notation}. For $x\in\mathbb{R}$, let $(x)_0=1$
 and $(x)_k=x (x-1) \cdots (x-k+1)$
for $k\in $ $\mathbb{Z^+}=\{1,2,3,\ldots\} $. For $k\in
\mathbb{N}=\{0,1,2,\ldots\}$ the binomial coefficient ${x \choose
k}$ is defined by ${x \choose k}=\frac {(x)_k}{k!}$. When $x,k\in
\mathbb{N}=\{0,1,2,\ldots\}$ and $k>x$, we define ${x \choose k}=0$.

\section{Preliminaries}
In general it is very hard to determine whether a given polynomial
is a \textbf{PP}. The following well-known criterion is a useful
characterization of permutation polynomials over a finite field
${\Ff}_q$.

\begin{thm}[\textbf{Hermite and Dickson}]
Let ${\Ff}_q$ be a finite field of order $q$, where $q$ is a power
of a prime $p$. Then $f(x)\in {\Ff}_q[x]$ is a permutation
polynomial of ${\Ff}_q$ if and only if the following two conditions
hold:
\begin{enumerate}
\item $f(x)$ has exactly one root in ${\Ff}_q$;

\item for each integer $n$ with $1 \leq n \leq q-2$ and $n \not\equiv 0\
(\mathrm{mod}\ p)$, the reduction of $[f(x)]^n\ (\mathrm{mod}\
x^q-x)$ has degree $\leq q-2$.
\end{enumerate}
\end{thm}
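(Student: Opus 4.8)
The plan is to translate both conditions into statements about the power sums $\sum_{c\in\Ff_q} f(c)^n$ and then to recover the value distribution of $f$ by a Vandermonde argument. The basic tool I would record first is the classical power-sum identity over $\Ff_q$: for an integer $s\ge 0$,
$$\sum_{c\in\Ff_q} c^s = \begin{cases} -1, & \text{if } s>0 \text{ and } (q-1)\mid s,\\ 0, & \text{otherwise,}\end{cases}$$
(with the convention $0^0=1$), which follows by summing a geometric progression over a primitive root of $\Ff_q^\times$ and noting $\sum_{c}c^0=q=0$.

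Next I would connect condition (2) to these sums. Since $c^q=c$ for every $c\in\Ff_q$, reducing any $g\in\Ff_q[x]$ modulo $x^q-x$ does not change the values $g(c)$, so $\sum_{c}g(c)=\sum_c \bar g(c)$, where $\bar g=\sum_{k=0}^{q-1}\bar a_k x^k$ is the reduction. Applying the power-sum identity term by term leaves only the $k=q-1$ contribution, giving $\sum_c \bar g(c) = -\bar a_{q-1}$. Taking $g=f^n$ shows that the reduction of $[f(x)]^n \pmod{x^q-x}$ has degree $\le q-2$ if and only if $\sum_{c\in\Ff_q} f(c)^n=0$; thus condition (2) is exactly the vanishing of these power sums for $1\le n\le q-2$ with $p\nmid n$. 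I would then remove the restriction $p\nmid n$ by the Frobenius identity $\big(\sum_c f(c)^m\big)^{p}=\sum_c f(c)^{pm}$ (additivity of $x\mapsto x^p$): writing any $n\in[1,q-2]$ as $n=p^a m$ with $p\nmid m$ shows $\sum_c f(c)^n=0$ for \emph{all} $1\le n\le q-2$, while $n=0$ gives $\sum_c f(c)^0=q=0$ automatically.

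For the forward direction, if $f$ is a \PP{} then $\{f(c):c\in\Ff_q\}=\Ff_q$, so $\sum_c f(c)^n=\sum_{a\in\Ff_q}a^n$, which the power-sum identity makes $0$ for $0\le n\le q-2$; and a \PP{} manifestly has exactly one root, so (1) and (2) hold. For the converse, let $v_a=|\{c:f(c)=a\}|$, so that $\sum_c f(c)^n=\sum_{a\in\Ff_q} v_a\, a^n$ in $\Ff_q$. Conditions (1) and (2) give $\sum_{a} v_a a^n=0$ for $0\le n\le q-2$, a system of $q-1$ linear equations in the $q$ unknowns $v_a$ whose coefficient matrix, $(a^n)_{0\le n\le q-2,\,a\in\Ff_q}$, is obtained from the invertible $q\times q$ Vandermonde matrix of the distinct elements $a$ by deleting one row. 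Hence it has rank $q-1$ and one-dimensional kernel, which visibly contains the all-ones vector, so $v_a\equiv\lambda\pmod p$ for a single constant $\lambda$. Condition (1) forces $v_0=1$, hence $\lambda=1$, so every $v_a\equiv 1\pmod p$ and in particular $v_a\ge 1$; since $\sum_a v_a=q$ with each $v_a\ge 1$, all $v_a=1$, i.e.\ $f$ is a \PP.

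The main obstacle is the passage from information modulo $p$ to honest equalities: the Vandermonde step only determines the residues $v_a\bmod p$, and it is the combination of condition (1), the integrality and non-negativity of the $v_a$, and the exact count $\sum_a v_a=q$ that pins each $v_a$ to $1$. The Frobenius reduction that lets condition (2) be tested only for $p\nmid n$ is the other point requiring care.
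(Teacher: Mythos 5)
The paper states the Hermite--Dickson criterion without proof, treating it as a classical result (it is essentially Theorem~7.4 of Lidl and Niederreiter \cite{ln}), so there is no in-paper argument to compare yours against. Your proof is correct and is the standard one: the power-sum identity $\sum_{c\in\Ff_q}c^s$ converts condition (2) into the vanishing of $\sum_c f(c)^n$, the Frobenius identity removes the restriction $p\nmid n$, and the truncated Vandermonde system pins the preimage counts $v_a$ modulo $p$ to a common constant, which condition (1) forces to be $1$; the integrality, nonnegativity, and the exact count $\sum_a v_a=q$ then give $v_a=1$ for all $a$. All the delicate points are handled: the one-dimensional kernel visibly contains the all-ones vector because $\sum_a a^n=0$ for $0\le n\le q-2$, and you correctly identify that condition (1) is genuinely needed (without it, $\lambda=0$ is possible, as for $f(x)=x^p-x$ over $\Ff_p$). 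One cosmetic remark: the linear system for $0\le n\le q-2$ uses only condition (2) together with the trivial $n=0$ identity $\sum_a v_a=q\equiv 0$; condition (1) enters only afterwards to determine $\lambda$. This does not affect the validity of the argument.
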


Suppose $f(x)=\sum_{i=0}^n a_ix^i\in\Ff_{p^t}[x]$ is a polynomial of
degree $n$. Then we set $$\psi f(x)=\sum_{i=0}^n a_i^px^i.$$

\begin{lem}\label{shift}
If $f(x)$ is a {\em \textbf{PP}} of $\Ff_q$, then so are
$f_1(x)=af(bx+c)+d$ and $\psi f(x)$, for all $a,b \neq 0, c, d \in
{\Ff}_q.$
\end{lem}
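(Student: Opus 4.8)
The plan is to reduce everything to the observation that $f$ being a \textbf{PP} of $\Ff_q$ means precisely that the induced map $c\mapsto f(c)$ is a bijection of the finite set $\Ff_q$; it then suffices to exhibit each of $f_1$ and $\psi f$ as a composition of bijections of $\Ff_q$.

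First I would handle $f_1(x)=af(bx+c)+d$. Since $b\neq 0$, the map $L\colon x\mapsto bx+c$ is an affine transformation of $\Ff_q$ with inverse $x\mapsto b^{-1}(x-c)$, hence a bijection; likewise, since $a\neq 0$, the map $M\colon y\mapsto ay+d$ is a bijection of $\Ff_q$. The induced map of $f_1$ is then $M\circ f\circ L$, a composition of three bijections of $\Ff_q$, and is therefore a bijection. Thus $f_1$ is a \textbf{PP}.

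The slightly less routine part is $\psi f$, and here the key step is a Frobenius-conjugation identity. Let $\phi\colon c\mapsto c^p$ denote the Frobenius map; because $q=p^t$, $\phi$ is an automorphism of $\Ff_q$, in particular a bijection, with inverse $\phi^{-1}\colon c\mapsto c^{p^{t-1}}$. Using that $\phi$ is additive and multiplicative (the characteristic-$p$ identities $(\alpha+\beta)^p=\alpha^p+\beta^p$ and $(\alpha\beta)^p=\alpha^p\beta^p$), for every $d\in\Ff_q$ one computes
$$\bigl(f(d)\bigr)^p=\Bigl(\sum_i a_i d^i\Bigr)^p=\sum_i a_i^p (d^p)^i=(\psi f)(d^p),$$
that is, $\phi\circ f=(\psi f)\circ\phi$ as maps on $\Ff_q$. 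Rearranging gives $\psi f=\phi\circ f\circ\phi^{-1}$, which exhibits the induced map of $\psi f$ as a composition of bijections of $\Ff_q$; hence $\psi f$ is a \textbf{PP}.

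I expect no serious obstacle here: the only points requiring care are that the Frobenius identity genuinely holds at the level of the \emph{induced functions} on $\Ff_q$ (not merely as an identity of formal polynomials), which is exactly what the displayed computation verifies, and that $\phi$ is invertible on $\Ff_q$, which uses finiteness of the field via $\phi^t=\mathrm{id}$.
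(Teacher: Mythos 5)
Your proof is correct. The paper states Lemma~\ref{shift} without proof (it is a standard fact), so there is nothing to compare against; your argument --- realizing $f_1$ as $M\circ f\circ L$ with $L,M$ affine bijections, and $\psi f$ as the Frobenius conjugate $\phi\circ f\circ\phi^{-1}$ via the identity $(f(d))^p=(\psi f)(d^p)$ --- is exactly the standard justification one would supply.
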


By Lemma~\ref{shift}, when considering \textbf{PP}s of degree $n$
over $\Ff_q$ ($q=p^t$), it suffices to consider monic polynomials
$f(x)$ of degree $n$ satisfying the conditions that $f(0)=0$ and the
coefficient of $x^{n-1}$ is equal to $0$ if $p\nmid n$. Such a
\textbf{PP} will be called a \emph{normalized} \textbf{PP}. For
convenience, a monic polynomial in $\Ff_q[x]$ satisfying the above
two conditions will also be called a \emph{normalized polynomial}.

We define an equivalence relation on the set of polynomials over
$\Ff_{2^t}$.

\begin{Definition}
In this paper, two polynomials $f(x), g(x)\in \Ff_{2^t}[x]$ are said
to be \emph{equivalent} if either $g(x)=af(bx+c)+d$ or $g(x)=\psi
f(x)$, with $a,b \neq 0, c, d \in {\Ff}_{2^t}$.
\end{Definition}

For normalized polynomials over $\Ff_{2^t}$ of degrees $6$ or $7$,
if we are only concerned with their permutation behavior, then
Lemma~\ref{shift} also allows us to assume that the coefficient of
$x^5$ is either $0$ or $1$, by suitable choices of $a$ and $b$ in
the lemma (with $c=d=0$). Let $\mu $ be any element of $\Ff_{2^t}$
such that $\Tr(\mu)=1$, where $\Tr:\Ff_{2^t}\rightarrow \Ff_2$ is
the absolute trace. Suppose $f(x)=x^6+x^5+b x^4+c x^3+d x^2+e
x\in\Ff_{2^t}[x]$ is a polynomial of degree $6$.  Then the
coefficient of $x^4$ in $f(x+a)$ is either $0$ or $\mu$, where $a$
is a root of $x^2+x+b$ (if $\Tr(b)=0$) or of $x^2+x+b+\mu$ (if
$\Tr(b)=1$), respectively, in $\Ff_{2^t}$.

We will need the following classical result due to Lucas.

\begin{thm}[\textbf{Lucas}]\label{lucas}
Let $p$ be a prime, and $n, r$ be positive integers having the
following $p$-adic expansions:
$$n =a_0+a_1 p+a_2 p^2+\cdots +a_k p^k \ \ \ \  (0 \leq a_i \leq p-1 ,\forall \ 0 \leq i \leq k),$$
$$r =b_0+b_1 p+b_2 p^2+\cdots +b_k p^k \ \ \ \  (0 \leq b_i \leq p-1 ,\forall \ 0 \leq i \leq k). $$
Then
$$ {n \choose r } \equiv  \prod_{i=0}^k  {a_i \choose b_i }\;\;  ({\rm mod}\; p).$$
\end{thm}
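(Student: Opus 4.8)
The plan is to prove Lucas' theorem by a generating-function argument in the polynomial ring $\Ff_p[x]$, built on the characteristic-$p$ identity $(1+x)^p\equiv 1+x^p\pmod p$. First I would establish this ``freshman's dream'': for $1\le j\le p-1$ the binomial coefficient ${p \choose j}=\frac{p(p-1)\cdots(p-j+1)}{j!}$ is divisible by $p$, since the factor $p$ in the numerator is not cancelled by $j!$ (as $j<p$ means $j!$ is prime to $p$). Hence all intermediate terms in the expansion of $(1+x)^p$ vanish modulo $p$, leaving $(1+x)^p\equiv 1+x^p$. Raising to the $p$-th power repeatedly then gives $(1+x)^{p^i}\equiv 1+x^{p^i}\pmod p$ for every $i\ge 0$.

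Next I would use the $p$-adic expansion $n=\sum_{i=0}^k a_i p^i$ to factor and reduce
$$(1+x)^n=\prod_{i=0}^k\left((1+x)^{p^i}\right)^{a_i}\equiv\prod_{i=0}^k\left(1+x^{p^i}\right)^{a_i}\pmod p.$$
Expanding each factor by the ordinary binomial theorem, the right-hand side becomes
$$\prod_{i=0}^k\ \sum_{j_i=0}^{a_i}{a_i \choose j_i}\,x^{j_i p^i}.$$
The heart of the proof is to compare the coefficient of $x^r$ on the two sides. On the left it is ${n \choose r}$ by definition; on the right, a monomial $x^r$ arises precisely from choosing digits $j_i$ with $0\le j_i\le a_i\le p-1$ satisfying $\sum_{i=0}^k j_i p^i=r$.

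The step demanding the most care is the uniqueness of the base-$p$ representation of $r$: because each chosen digit satisfies $0\le j_i\le p-1$, the tuple $(j_0,\dots,j_k)$ is forced to coincide with the $p$-adic digits $(b_0,\dots,b_k)$ of $r$. Thus a single term contributes, and the coefficient of $x^r$ on the right equals $\prod_{i=0}^k{a_i \choose b_i}$. Equating the two coefficients modulo $p$ yields the asserted congruence. I would close by noting that this reasoning also covers the degenerate case: if $r>n$ then some $b_i>a_i$, making a factor ${a_i \choose b_i}=0$, which is consistent with ${n \choose r}=0$, so the formula holds in all cases.
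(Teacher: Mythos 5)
Your proposal is a correct and complete proof of Lucas' theorem. Note, however, that the paper itself offers no proof of this statement: it is quoted as a classical result (``We will need the following classical result due to Lucas'') and used as a black box, so there is no in-paper argument to compare yours against. Your route is the standard generating-function proof: the congruence $(1+x)^p\equiv 1+x^p \pmod{p}$ via $p\mid\binom{p}{j}$ for $1\le j\le p-1$, iterated to $(1+x)^{p^i}\equiv 1+x^{p^i}$, the factorization of $(1+x)^n$ along the $p$-adic digits of $n$, and the extraction of the coefficient of $x^r$ using the uniqueness of the base-$p$ representation (which is exactly the point that needs care, and you handle it correctly by observing that the chosen exponents $j_i$ all lie in $\{0,\dots,p-1\}$). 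Your closing remark about the case $r>n$ is also sound and matches the paper's convention that $\binom{x}{k}=0$ when $k>x$: if every $b_i\le a_i$ then $r\le n$, so $r>n$ forces some factor $\binom{a_i}{b_i}$ to vanish. No gaps.
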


We will also need to use multinomial coefficients, which we define
below. For all $n$, $r$, $k_1, \ldots , k_r$ in
$\mathbb{N}=\{0,1,2,\ldots\}$ with $k_1+\cdots +k_r=n$ and $r\geq
2$, we define the multinomial coefficient ${n \choose k_1, k_2,
\ldots, k_r }$ by
$${n \choose k_1, k_2, \ldots, k_r }=\frac{n!}{k_1!k_2!\cdots
k_r!}.$$

The following theorem is known as the multinomial theorem.
\begin{thm}
We have the following expansion:
$$(x_1+x_2+\cdots + x_r)^n =
\sum_{  {k_1+k_2+\cdots +k_r=n} \atop {k_1 \geq 0, k_2 \geq 0
,\ldots, k_r \geq 0  } } {n \choose k_1, k_2, \ldots, k_r }x_1^{k_1}
x_2^{k_2} \cdots x_r^{k_r}.$$
\end{thm}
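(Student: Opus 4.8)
The plan is to prove the multinomial theorem by induction on the number of variables $r$, using the ordinary binomial theorem as the engine of the induction. The base case is $r=2$, where the claimed identity reads $(x_1+x_2)^n=\sum_{k_1+k_2=n}\binom{n}{k_1,k_2}x_1^{k_1}x_2^{k_2}$; since $\binom{n}{k_1,k_2}=\frac{n!}{k_1!\,k_2!}=\binom{n}{k_1}$ with $k_2=n-k_1$, this is precisely the binomial theorem, which I take as known. (If one prefers, the binomial theorem itself can be proved by a short induction on $n$ via Pascal's rule, but I would cite it as standard.)

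For the inductive step, suppose the formula holds for $r-1$ variables. I would group the first $r-1$ summands together and write $(x_1+\cdots+x_r)^n=\big((x_1+\cdots+x_{r-1})+x_r\big)^n$. Applying the binomial theorem to this sum of two quantities gives $\sum_{j=0}^{n}\binom{n}{j}(x_1+\cdots+x_{r-1})^j\,x_r^{\,n-j}$. Then I would expand each power $(x_1+\cdots+x_{r-1})^j$ by the inductive hypothesis, obtaining a double sum, over $j$ and over tuples $(k_1,\ldots,k_{r-1})$ with $k_1+\cdots+k_{r-1}=j$, in which the coefficient $\binom{n}{j}\binom{j}{k_1,\ldots,k_{r-1}}$ is attached to the monomial $x_1^{k_1}\cdots x_{r-1}^{k_{r-1}}x_r^{\,n-j}$. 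Setting $k_r=n-j$ reindexes this as a single sum over all $(k_1,\ldots,k_r)$ with $k_1+\cdots+k_r=n$.

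The one step that needs care is verifying that the product of coefficients collapses correctly. With $k_r=n-j$ and $k_1+\cdots+k_{r-1}=j$, the factorial definition gives $\binom{n}{j}\binom{j}{k_1,\ldots,k_{r-1}}=\frac{n!}{j!\,(n-j)!}\cdot\frac{j!}{k_1!\cdots k_{r-1}!}=\frac{n!}{k_1!\cdots k_{r-1}!\,k_r!}=\binom{n}{k_1,\ldots,k_r}$, so the $j!$ cancels and the two-stage coefficient becomes exactly the multinomial coefficient. Substituting this into the double sum yields the claimed expansion. I do not anticipate a genuine obstacle here: the only subtlety is the bookkeeping in the reindexing, and one should note that the degenerate terms $j=0$ and $j=n$ are handled uniformly by the convention $0!=1$. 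As an alternative, one could argue combinatorially by expanding $(x_1+\cdots+x_r)^n$ as a product of $n$ identical factors, observing that each monomial $x_1^{k_1}\cdots x_r^{k_r}$ arises precisely by selecting the term $x_i$ from exactly $k_i$ of the factors, and counting such selections as $\binom{n}{k_1,\ldots,k_r}$.
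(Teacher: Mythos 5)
Your proof is correct. The paper states the multinomial theorem as a known classical fact and supplies no proof at all, so there is nothing to compare against; your induction on $r$, reducing to the binomial theorem via the grouping $(x_1+\cdots+x_{r-1})+x_r$ and the coefficient identity $\binom{n}{j}\binom{j}{k_1,\ldots,k_{r-1}}=\binom{n}{k_1,\ldots,k_r}$ with $k_r=n-j$, is a complete and standard derivation, and the combinatorial alternative you sketch at the end is equally valid.
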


The following is the multinomial analogue of the Lucas theorem.
\begin{prop}\label{prop2.5}
Let $p$ be a prime, and let $k_1,k_2,\ldots, k_r, n$ be nonnegative
integers having the following $p$-adic expansions:
\begin{eqnarray*}
n &=&a_0+a_1 p+a_2 p^2+\cdots +a_s p^s \ \ \ \  (0 \leq a_j \leq p-1, \forall\; 0\leq j\leq s),\\
k_i&=&b_{i0}+b_{i1} p+b_{i2} p^2+\cdots +b_{is} p^s \ \  (0 \leq
b_{ij} \leq p-1, \forall\; 1 \leq i\le r, 0\le j \leq s).
\end{eqnarray*}
Then
 $${n \choose k_1, k_2, \ldots, k_r }\equiv  {a_0  \choose b_{10},b_{20}, \ldots,
b_{r0} }
 \cdots  {a_s  \choose b_{1s},b_{2s}, \ldots, b_{rs} }\; ({\rm mod}\; p).$$
In particular,
$${n \choose k_1, k_2, \ldots, k_r }  \not\equiv 0\; ({\rm mod}\; p)
\iff\sum _{i=1}^{r}
 b_{ij} = a_j , \  \forall \  0 \leq j \leq s .$$
\end{prop}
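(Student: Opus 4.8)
The plan is to prove the congruence by a generating-function argument carried out in the polynomial ring $\Ff_p[x_1,\ldots,x_r]$, and then to read off the stated divisibility criterion from it. The starting point is the Frobenius identity: in characteristic $p$ one has $(u+v)^p=u^p+v^p$, and by a straightforward induction on $r$ this yields $(x_1+\cdots+x_r)^p = x_1^p+\cdots+x_r^p$ in $\Ff_p[x_1,\ldots,x_r]$; iterating, $(x_1+\cdots+x_r)^{p^j}=x_1^{p^j}+\cdots+x_r^{p^j}$ for every $j\ge 0$. All the displays below are honest equalities in $\Ff_p[x_1,\ldots,x_r]$, so comparing coefficients there is exactly comparing integer coefficients modulo $p$.

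Next I would use the base-$p$ expansion $n=\sum_{j=0}^s a_j p^j$ to factor the $n$-th power. Writing $(x_1+\cdots+x_r)^n = \prod_{j=0}^s\big((x_1+\cdots+x_r)^{p^j}\big)^{a_j}$ and applying the Frobenius identity to each factor gives
$$(x_1+\cdots+x_r)^n = \prod_{j=0}^s\big(x_1^{p^j}+\cdots+x_r^{p^j}\big)^{a_j}.$$
Now I would expand both sides with the multinomial theorem and compare the coefficient of the fixed monomial $x_1^{k_1}\cdots x_r^{k_r}$. On the left the coefficient is the reduction mod $p$ of ${n \choose k_1,\ldots,k_r}$.

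For the right-hand side, each factor expands as $\big(x_1^{p^j}+\cdots+x_r^{p^j}\big)^{a_j}=\sum {a_j \choose c_{1j},\ldots,c_{rj}}\,x_1^{c_{1j}p^j}\cdots x_r^{c_{rj}p^j}$, the sum over nonnegative $c_{ij}$ with $\sum_i c_{ij}=a_j$. Multiplying over $j$, the exponent of $x_i$ in a resulting monomial is $\sum_j c_{ij}p^j$, and matching $x_1^{k_1}\cdots x_r^{k_r}$ forces $\sum_j c_{ij}p^j = k_i$ for each $i$. The decisive observation is that each $c_{ij}$ satisfies $0\le c_{ij}\le a_j\le p-1$, so $\sum_j c_{ij}p^j$ is a genuine base-$p$ expansion; by uniqueness of base-$p$ representation this forces $c_{ij}=b_{ij}$ for all $i,j$. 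Hence the coefficient of the chosen monomial on the right is $\prod_{j=0}^s {a_j \choose b_{1j},\ldots,b_{rj}}$ when $\sum_i b_{ij}=a_j$ for every $j$, and is $0$ otherwise (no term can produce that monomial). Equating the two coefficients modulo $p$ gives the displayed congruence.

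Finally, for the \emph{in particular} clause I would note that each factor ${a_j \choose b_{1j},\ldots,b_{rj}}$ with $\sum_i b_{ij}=a_j$ has numerator $a_j!$ with $a_j\le p-1$, hence is a nonzero residue mod $p$; so the product is nonzero mod $p$ exactly when every factor is present, i.e. when $\sum_i b_{ij}=a_j$ for all $j$, and vanishes as soon as this fails for some $j$ (reflecting a carry in the base-$p$ addition of $k_1,\ldots,k_r$). The only point requiring care is the uniqueness-of-digits step together with the bookkeeping convention that a symbol ${a_j \choose b_{1j},\ldots,b_{rj}}$ with $\sum_i b_{ij}\ne a_j$ is read as $0$; the coefficient comparison supplies this automatically, so I do not anticipate a real obstacle. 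An alternative route, telescoping ${n \choose k_1,\ldots,k_r}$ into a product of binomials and applying the Lucas theorem (Theorem~\ref{lucas}) to each, also works, but tracking the base-$p$ digits of the partial sums $n-k_1-\cdots-k_i$ through the borrows is messier, so I prefer the generating-function argument.
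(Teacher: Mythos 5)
Your argument is correct and complete. The paper in fact states Proposition~\ref{prop2.5} without any proof (it is presented as the known multinomial analogue of Lucas's theorem), so there is no in-paper argument to compare against; your generating-function proof via the Frobenius identity $(x_1+\cdots+x_r)^{p^j}=x_1^{p^j}+\cdots+x_r^{p^j}$ in $\Ff_p[x_1,\ldots,x_r]$ is the standard and cleanest route, and you handle the two points that actually require care: the uniqueness-of-digits step forcing $c_{ij}=b_{ij}$ (valid because $0\le c_{ij}\le a_j\le p-1$), and the convention that ${a_j\choose b_{1j},\ldots,b_{rj}}$ is read as $0$ when $\sum_i b_{ij}\ne a_j$, which the coefficient comparison supplies for free. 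Your observation for the ``in particular'' clause --- that each surviving factor is a unit mod $p$ because $a_j\le p-1$ keeps $p$ out of both numerator and denominator --- is exactly what is needed to turn the congruence into the stated nonvanishing criterion. The alternative you mention, telescoping ${n\choose k_1,\ldots,k_r}=\prod_i{n-k_1-\cdots-k_{i-1}\choose k_i}$ and applying Theorem~\ref{lucas} to each binomial factor, is presumably what the authors have in mind given that they state Lucas's theorem immediately beforehand, but as you note it requires tracking the digits of the partial sums through possible borrows; your direct argument avoids that bookkeeping entirely.
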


\section{Permutation polynomials of degree $6$ over finite fields of characteristic $2$}\label{degree6}

Our aim in this section is to classify all permutation polynomials
of degree 6 of $\Ff_{2^t}$. First we note that in \cite{D1}, Dickson
already obtained some restrictions on the coefficients of these
polynomials.

\begin{thm}[\textbf{Dickson}]\label{thm4.1}
Let $$f(x)=x^6+a x^5+b x^4+ c x^3 +d x^2 +e x\in\Ff_{2^t}[x]$$ be a
{\em \textbf{PP}} of ${\Ff}_{2^{t}}$ such that $f(x)\neq x^6$ and
when $t=5$, $f(x)\neq x^6$, or $x^6+ax^5+a^4x^2$. Then
\begin{enumerate}
\item when $t$ is even, we have $c=a^3\neq 0$.

\item when $t$ is odd, we have $a \neq 0$, $c \neq
0$ and $c \neq a^3 $.
\end{enumerate}
\end{thm}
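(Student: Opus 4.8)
The plan is to apply the Hermite--Dickson criterion, specifically condition (2), to the polynomial $f(x)=x^6+ax^5+bx^4+cx^3+dx^2+ex$. The key idea is to compute the reduction of $[f(x)]^n \pmod{x^q-x}$ for small values of $n$ coprime to $p=2$, and force the coefficient of $x^{q-1}$ to vanish. Since $q=2^t$, the smallest useful odd exponents are $n=1$ and $n=3$; the exponent $n=3$ is particularly natural because $\deg f^3 = 18$, and we want to examine which power $x^k$ reduces to $x^{q-1}$ modulo $x^q-x$. The crucial observation is that when we expand $[f(x)]^n$ by the multinomial theorem, a monomial $x^k$ with $k \geq q$ contributes to the coefficient of $x^{q-1}$ precisely when $k \equiv q-1 \pmod{q-1}$, i.e. when $q-1 \mid k$ and $k>0$. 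So the strategy is to collect, for each relevant $n$, all multinomial terms whose total degree is a positive multiple of $q-1$, and demand that their sum be zero.

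The main technical engine will be Proposition~\ref{prop2.5} (the multinomial Lucas theorem), which tells us exactly which multinomial coefficients ${n \choose k_1,\dots,k_6}$ are nonzero modulo $2$: writing each exponent in binary, the digitwise sums must match those of $n$. For the characteristic-$2$ case this is very restrictive, so most multinomial coefficients vanish and only a handful of terms survive in each coefficient computation. First I would handle $n=3$: expand $[f(x)]^3$, identify the monomials $x^k$ with $k$ a positive multiple of $q-1$ (for $t$ even, where $\gcd(6,q-1)=3$, the value $k=18$ is the first candidate when $q-1$ divides $18$, but more robustly one tracks $k \in \{q-1, 2(q-1), \dots\}$ up to $\deg f^3 = 18$), compute the coefficient of $x^{q-1}$ using the surviving multinomial coefficients, and set it equal to zero. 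This should yield the relation $c = a^3$ in the even case. The parity split between $t$ even and $t$ odd enters through which binomial/multinomial coefficients are nonzero mod $2$ and through the divisibility $q-1 \mid k$, since $q-1$ has different residues depending on the parity of $t$.

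For part (1), $t$ even, once the $n=3$ computation gives $c=a^3$, I would rule out $c=0$ (equivalently $a=0$): if $a=0$ then $c=0$, and I would show via a further coefficient condition (from $n=3$ or $n=5$) combined with the exceptional cases already excluded in the hypothesis that this forces $f(x)=x^6$, contradicting $f(x)\neq x^6$. For part (2), $t$ odd, the same Hermite--Dickson machinery with $n=3$ and possibly $n=5$ yields the three separate conclusions $a\neq 0$, $c\neq 0$, and $c\neq a^3$; here the relation analogous to $c=a^3$ flips sign or picks up extra terms because $q-1$ is now divisible by $3$ no longer holds, changing which degrees reduce to $x^{q-1}$. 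The careful bookkeeping of exceptional small fields, especially $t=5$ where $\gcd(6,q-1)=1$ but $q=32$ is small enough that the asymptotic reductions behave differently, explains the explicit exclusion of $x^6+ax^5+a^4x^2$ in the hypothesis.

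The hardest part will be the exhaustive yet clean extraction of the coefficient of $x^{q-1}$ in $[f(x)]^n$ across all residue classes of $t \bmod$ small integers, ensuring no surviving multinomial term is overlooked and that the exceptional small-field cases ($t=5$ in particular, and possibly $t=1,2,3$) are correctly quarantined. Managing the interplay between the binary-digit constraints from Proposition~\ref{prop2.5} and the divisibility condition $q-1 \mid k$ is where errors are most likely to creep in, since a single missed term would invalidate the relation $c=a^3$ or one of the three inequalities in the odd case.
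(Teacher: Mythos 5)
The paper does not actually prove this theorem; it is quoted directly from Dickson's 1896--97 paper \cite{D1} as a known result. So the comparison must be against the method Dickson used (and which the present paper uses for its own lemmas), namely the Hermite--Dickson criterion applied with exponents $n$ of size roughly $(q-1)/6$.

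There is a genuine gap in your proposal, and it is fatal as written: your choice of exponents $n=1,3,5$ gives no information except over tiny fields. The coefficient of $x^{q-1}$ in the reduction of $[f(x)]^n$ modulo $x^q-x$ can only be nonzero if some monomial of $[f(x)]^n$ has degree equal to a positive multiple of $q-1$, and since $\deg f^n = 6n$, this requires $6n \ge q-1$. For $n=3$ you need $q \le 19$, i.e.\ $t \le 4$; for $n=5$ you need $q\le 31$. For every field with $t\ge 5$ the conditions you propose to extract are vacuously satisfied by \emph{every} polynomial, so no relation such as $c=a^3$ can emerge from them. The correct choice --- the one used throughout Section 3 of the paper (see Lemma~\ref{lemm3.1}, where $q=6m+4$ and the exponents are $m+5$ and $m+13$, and Lemma~\ref{lemm3.2}, where $q=6m+2$ and the exponents are $m+2$, $m+6$, $m+40$) --- is to take $n=m+i$ for small $i$, where $q=6m+r$, so that $6n$ just exceeds $q-1$ and exactly one multiple of $q-1$ (namely $q-1$ itself) lies in range. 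With that choice your remaining machinery (the multinomial theorem plus Proposition~\ref{prop2.5} to kill all but a few terms, and the $2$-adic digit bookkeeping that distinguishes $t$ even from $t$ odd through the binary expansion of $m$) is exactly the right engine, and is how both Dickson's original argument and the paper's subsequent lemmas proceed. But as stated, the plan extracts no constraint at all for $t\ge 5$, which is precisely the range where the theorem has content.
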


Based on this result we obtain the main result of this section.

\begin{Theorem}\label{main}
Let $\alpha$ and $\beta$ be roots of $x^3+x+1\in\Ff_2[x]$ and
$x^4+x+1\in\Ff_2[x]$, respectively, in some extension fields of
$\Ff_2$. The following are permutation polynomials of degree $6$
over $\Ff_{2^t}$:
$$\begin{array}{cl}
x^6&\mathrm{whenever} \ t\ \mathrm{is\ odd}\\
x^6+x^5+x^3+x^2+x& t=3\\
x^6+x^5+x^3+\alpha x^2+\alpha x& t=3\\
x^6+x^5+\alpha x^3 & t=3\\
x^6+x^5+x^4+x^3+x^2 & t=3\\
x^6+x^5+x^4+x^3+x & t=3\\
x^6+x^5+x^4+\alpha^3 x^3+\alpha^4 x^2+\alpha^6 x & t=3\\
x^6+x^5+x^4 & t=3\\
x^6+x^3+x^2 & t=3\\
x^6+x^5+x^3+\beta^3 x^2+\beta^5 x & t=4\\
x^6+x^5+\beta^3 x^4+x^3+\beta x^2+\beta^6 x & t=4\\
x^6+x^5+\beta^3 x^4+x^3+\beta^8 x^2+\beta^{13} x & t=4\\
x^6+x^5+x^2 & t=5.
\end{array}$$

All other {\em \PP}s of degree $6$ over $\Ff_{2^t}$ are equivalent
to one of the above.
\end{Theorem}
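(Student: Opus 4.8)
The plan is to classify all degree-$6$ permutation polynomials over $\Ff_{2^t}$ by reducing to normalized representatives and then using the Hermite--Dickson criterion together with Dickson's coefficient restrictions (Theorem~\ref{thm4.1}). By Lemma~\ref{shift} and the equivalence relation, every degree-$6$ \PP{} is equivalent to a normalized polynomial $f(x)=x^6+ax^5+bx^4+cx^3+dx^2+ex$ with $a\in\{0,1\}$, and moreover (as noted after the definition of equivalence) we may assume the coefficient of $x^4$ is either $0$ or $\mu$ with $\Tr(\mu)=1$. Theorem~\ref{thm4.1} then pins down the cubic coefficient: when $t$ is even, $c=a^3\neq0$ forces $a=1$ and $c=1$; when $t$ is odd, $a\neq0$ (so $a=1$), $c\neq0$, and $c\neq1$. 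So after normalization there are only a handful of free parameters $b,c,d,e$ to analyze.

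The core of the argument is the Hermite criterion. I would compute the reductions of $[f(x)]^n \pmod{x^q-x}$ for the relevant exponents $n$ and demand that each has degree $\leq q-2$; equivalently, I would track the coefficient of $x^{q-1}$ in each power. Using the multinomial theorem together with Proposition~\ref{prop2.5} (the Lucas-type congruence mod $2$), the coefficient of $x^{q-1}$ in $[f(x)]^n$ becomes a sum of multinomial coefficients that are nonzero mod $2$ only for index vectors whose binary digits add without carrying, as encoded in the $\iff$ statement of that proposition. This converts each Hermite condition into a combinatorial/polynomial identity in the coefficients $a,b,c,d,e$ over $\Ff_{2^t}$. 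Imposing that all these vanish produces a system of algebraic equations whose solutions are exactly the candidate \PP{}s.

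I expect the analysis to split naturally into the $t$ even and $t$ odd cases, since Theorem~\ref{thm4.1} gives different constraints, and further into small $t$ (where $q\le 6^4$ or $\gcd(6,q-1)$ considerations leave finitely many fields) versus large $t$. For $t$ even with $t\ge6$, the result of Mertens cited in the introduction, combined with Lenstra's theorem ruling out \PP{}s when $2^t>6^4$ and $\gcd(6,2^t-1)>1$, already settles matters, leaving only $t=2,4$ to check by the Hermite computation. For $t$ odd, since $\gcd(6,2^t-1)=1$ the Carlitz--Wan bound does not apply, so the Hermite conditions must be pushed to show that nontrivial solutions exist only for the small fields $t=3,5$ (and the monomial $x^6$, which is a \PP{} for all odd $t$ because $\gcd(6,2^t-1)=1$). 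The final step is to verify that each listed polynomial genuinely satisfies all Hermite conditions and that the listed solutions are pairwise inequivalent, so the table is complete and irredundant.

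The hard part will be controlling the Hermite computations for $t$ odd: because no degree bound forces a finite search, one must show directly from the multinomial congruences that the system of coefficient equations forces either $f(x)=x^6$ or membership in the short list, and this requires carefully choosing which exponents $n$ to test so that the resulting equations are strong enough to eliminate all spurious candidates while remaining tractable. Organizing the casework on $\Tr(b)$, on whether $d,e$ vanish, and on the residue of $q-1$ modulo the relevant exponents, so that the argument closes uniformly for all large odd $t$, is where the real effort lies.
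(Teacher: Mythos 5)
Your outline follows the same strategy as the paper (normalize via Lemma~\ref{shift} and Theorem~\ref{thm4.1}, then impose Hermite--Dickson conditions computed through the multinomial Lucas congruence, splitting on the parity of $t$), but as it stands it is a plan rather than a proof, and the part you defer --- ``carefully choosing which exponents $n$ to test so that the resulting equations are strong enough'' --- is precisely where all the content lies. The paper's even case uses the exponents $m+5$ and $m+13$ (with $2^t=6m+4$) and shows each branch of the resulting system forces $f$ to vanish at both $0$ and $1$; the odd case uses $m+2$, $m+6$, $m+40$ and a nontrivial elimination ($E_5=(c+1)E_4E_6$, $E_8=c(c+1)^4E_4^4E_7^4$, then the remainder $E_9$ of $E_7$ modulo $E_6$) to reach the contradiction $(c+1)^6=0$. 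None of this is routine, and your proposal gives no indication of how the system would actually be closed for all large odd $t$.

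There is also a substantive gap in your claim that imposing the vanishing of the Hermite coefficients ``produces a system of algebraic equations whose solutions are exactly the candidate \PP{}s.'' The tested Hermite conditions are necessary but not sufficient, and in the odd case they leave surviving one-parameter families such as $x^6+x^5+cx^3+c^2x$ and $x^6+x^5+cx^3+(c+1)x^2+(c^2+c+1)x$ which satisfy $E_1=E_2=0$. The paper eliminates these not by further Hermite conditions but by Lemma~\ref{oneroot}: a trace argument showing that $x^5+cx^2+x+c^2+c$ has exactly one root in $\Ff_q$ for $q$ an odd power of $2$, whence the candidates have two roots and fail to be injective. This auxiliary root-counting lemma is an independent idea your proposal does not anticipate, and without it (or a substitute) the odd case does not close. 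Finally, your disposal of the even case by citing Mertens and Lenstra is logically admissible but not self-contained; note also that Lenstra's theorem alone only covers $2^t>6^4$, so you are genuinely leaning on Mertens for $6\le t\le 10$ even, whereas the paper reproves that range directly.
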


We will prove Theorem~\ref{main} in two steps. First we deal with
the $t$ even case, which was previously handled by Mertens, as
reported by Mullen in \cite{G}. We begin with a lemma.

\begin{lem} \label{lemm3.1}
Let $$f(x)=x^6+x^5+b x^4+ x^3 +d x^2 +e x$$ be a normalized
polynomial in $\Ff_{2^t}[x]$, and let $[x^{k}](f(x))^{n}$ denote the
coefficient of $x^k$ in the expansion of   $(f(x))^n$ (mod $x^q-x$),
where $q=2^t$. If $t$ is even and $m \geq 42$, where $2^t=6m+4$,
then in ${\Ff}_q$ we have:
\begin{eqnarray}
[x^{6m+3}]f(x)^{m+5}&=&(b^8+1+e^4)(1+b^2+e)\label{even1} \\
&+&(b^8+b^4+d^4)(e^2+d^2+e),
\nonumber \\
{[x^{6m+3}]}f(x)^{m+13} &=&(b^{32}+b^{16}+d^{16})(1+b^2+e^2+d^2).
\label{even2}
\end{eqnarray}
\end{lem}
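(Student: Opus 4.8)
The overall plan is to convert the two coefficient identities, which live in the quotient $\Ff_q[x]/(x^q-x)$, into statements about \emph{ordinary} coefficients of an explicit power, and then to extract those coefficients modulo $2$ via the multinomial Lucas theorem (Proposition~\ref{prop2.5}). The starting observation is that $6m+3=q-1$. Reducing a monomial $x^e$ ($e\ge 1$) modulo $x^q-x$ replaces it by $x^{k}$ with $1\le k\le q-1$ and $k\equiv e\pmod{q-1}$; hence the coefficient of $x^{6m+3}=x^{q-1}$ in $f(x)^n\bmod(x^q-x)$ equals $\sum_{j\ge 1}[x^{j(q-1)}]f(x)^n$, the sum of the ordinary coefficients of $f(x)^n$ at the positive multiples of $q-1$.

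First I would dispose of all but one term of this sum using a degree bound. Writing $f(x)=x\,g(x)$ with $g(x)=x^5+x^4+bx^3+x^2+dx+e$, the nonzero terms of $f(x)^n$ have degree at most $6n$. For both $n=m+5$ and $n=m+13$ the only positive multiple of $q-1$ not exceeding $6n$ is $q-1$ itself once $m\ge 42$, since then $2(q-1)=12m+6>6n$ in both cases. Therefore each left-hand side equals the single ordinary coefficient $[x^{6m+3}]f(x)^n$. Next I would pass to a low-order coefficient by reversal: from $f(x)^n=x^n g(x)^n$ we get $[x^{6m+3}]f(x)^n=[x^{6m+3-n}]g(x)^n$, and with the reciprocal $G(y)=y^5 g(1/y)=1+y+by^2+y^3+dy^4+ey^5$ this becomes $[y^{6n-6m-3}]G(y)^n$. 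Thus \eqref{even1} is the coefficient $[y^{27}]G(y)^{m+5}$ and \eqref{even2} is $[y^{75}]G(y)^{m+13}$.

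Expanding by the multinomial theorem,
\begin{equation*}
[y^{N}]G(y)^{n}=\sum \binom{n}{k_0,k_1,k_2,k_3,k_4,k_5}\,b^{k_2}d^{k_4}e^{k_5},
\end{equation*}
the sum running over $k_0+\cdots+k_5=n$ with $k_1+2k_2+3k_3+4k_4+5k_5=N$, and by Proposition~\ref{prop2.5} such a multinomial coefficient is $\equiv 1\pmod 2$ precisely when $k_0,\dots,k_5$ have pairwise disjoint binary representations summing to $n$, and is $\equiv 0$ otherwise. The point requiring care is that the answer is then independent of $m$. Since $N\in\{27,75\}$ is bounded, each $k_i$ with $i\ge 1$ is small and its binary support lies in the lowest few positions, so all higher bits of $n$ are forced onto $k_0$. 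For even $t\ge 8$ one has $m=(2^{t-1}-2)/3$, whose binary expansion is the alternating string $1010\cdots10$; consequently the low-order bits of $n=m+5$ (and of $n=m+13$) are the same for every $m\ge 42$. The surviving monomials therefore correspond exactly to the finitely many ways of partitioning the fixed low bit-pattern of $n$ among the six slots $k_0,\dots,k_5$ subject to the weighted-sum constraint: the bit-values $\{1,2,4,8\}$ of $n=m+5$ in the first case, and $\{1,2,4,16,32\}$ of $n=m+13$ in the second. I would enumerate these assignments, read off $b^{k_2}d^{k_4}e^{k_5}$ for each, and collect terms to obtain the two stated factorizations.

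The conceptual steps above are short; the real work, and the main obstacle, is this final enumeration. The bulk of the effort lies in organizing the (finite but sizeable) case analysis of admissible compositions and verifying that the surviving contributions sum in characteristic $2$ to exactly $(b^8+1+e^4)(1+b^2+e)+(b^8+b^4+d^4)(e^2+d^2+e)$ and to $(b^{32}+b^{16}+d^{16})(1+b^2+e^2+d^2)$, respectively, without miscounting a multiplicity or dropping a surviving term.
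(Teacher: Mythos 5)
Your proposal is correct and follows essentially the same route as the paper: reduce $[x^{6m+3}]f(x)^n \pmod{x^q-x}$ to the single ordinary coefficient at $x^{q-1}$ via the degree bound $2(q-1)>6n$ for $m\ge 42$, expand by the multinomial theorem, apply Proposition~\ref{prop2.5} to keep only compositions with disjoint binary supports, and exploit the fixed low-order bit pattern of $m+5$ and $m+13$ to make the enumeration finite and $m$-independent. Your reversal to $[y^{27}]G(y)^{m+5}$ and $[y^{75}]G(y)^{m+13}$ is just a repackaging of the paper's elimination step yielding $i_2+2i_3+3i_4+4i_5+5i_6=27$, and like the paper you leave the final (finite) enumeration as routine bookkeeping.
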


\begin{proof}
The highest power of $x$ when we expand $f(x)^{m+13}$ is $6(m+13)$.
Since we want to find the coefficient of $x^{q-1}$ in the expansion
of $f(x)^{m+13}$ (mod $x^q-x$), the terms we are interested in are
of the form $x^{i(q-1)}$, $i\geq 1$. Thus if we need to consider the
terms $x^{i(q-1)}$, $i\ge 2$, it must be that
$$6(m+13)\ge 2(6m+3),$$
which is equivalent to $m\le 12$. Since we have assumed that $m\ge
42$, we do not need to consider the terms $x^{i(q-1)}$, $i\ge 2$,
when try to find the coefficient of $x^{q-1}$ in the expansion of
$f(x)^{m+13}$ (mod $x^q-x$). The same comment holds true when we try
to compute the coefficient of $x^{q-1}$ in the expansion of
$f(x)^{m+5}$ (mod $x^q-x$).

By the above comment and the multinomial theorem, the coefficient of
$x^{q-1}$ in the expansion of $f(x)^{m+5}$ (mod $x^q-x$) is equal to
$$
\sum_{
{i_1+i_2+\cdots +i_6=m+5} \atop {6i_1+5i_2+4i_3+3i_4+2i_5+i_6=6m+3}
} {m+5 \choose i_1,i_2, \ldots, i_6 }b^{i_3}d^{i_5}e^{i_6},$$ where
the multinomial coefficient is viewed modulo 2. We can easily find
all solutions to the system of equations:
 \begin{eqnarray*}
 i_1+i_2+i_3+i_4+i_5+i_6&=&m+5;\\
 6i_1+5i_2+4i_3+3i_4+2i_5+i_6&=&6m+3;\\
 i_1,i_2,\ldots ,i_6&\geq& 0
  \end{eqnarray*}
for which the multinomial coefficient ${m+5 \choose i_1,i_2, \ldots,
i_6 }$ is 1 modulo 2. We give some details below.

The above system of equations is equivalent to
 \begin{eqnarray*}
 i_1+i_2+i_3+i_4+i_5+i_6&=&m+5;\\
 i_2+2i_3+3i_4+4i_5+5i_6&=&27;\\
 i_1,i_2,\ldots ,i_6&\geq& 0.
  \end{eqnarray*}
Note that $m$ has the following $2$-adic expansion:
\begin{equation}
m=2^1+2^3+2^5 +\cdots+2^{t-3}.
\end{equation}
and thus
\begin{equation}
\label{e-1}
 m+5 =1+2^1+2^2+2^3+2^5+\cdots
 +2^{t-3}.
\end{equation}

Now, in view of Lucas' Theorem and Proposition~\ref{prop2.5}, the
multinomial coefficient ${m+5 \choose i_1,i_2, \ldots, i_6 }$
vanishes modulo 2 whenever any two of $i_1,\cdots,i_6$ have a $1$ in
the same digit of the $2$-adic expansion. For instance, the solution
 $(i_1,i_2,i_3,i_4,i_5,i_6)=(m-10,8,4,1,2,0)$
 gives ${m+5 \choose m-10,8,4,1,2,0}b^4d^2=b^4d^2$ for any $m\geq42$, since
 \begin{eqnarray*}
m+5&=&1+1\cdot 2^1+1 \cdot2^2+ 1 \cdot 2^3+0 \cdot2^4+1 \cdot2^5+
\cdots +1\cdot2^{t-3}\\
 m-10&=&0+0\cdot 2^1+0 \cdot2^2+ 0 \cdot 2^3+0 \cdot2^4+1 \cdot2^5+
\cdots +1\cdot2^{t-3} \\
 8&=& 0+0\cdot 2^1+0 \cdot2^2+ 1 \cdot 2^3+0 \cdot2^4+0 \cdot2^5+
\cdots +0\cdot2^{t-3} \\
4&=& 0+0\cdot 2^1+1 \cdot2^2+ 0 \cdot 2^3+0 \cdot2^4+0 \cdot2^5+
\cdots +0\cdot2^{t-3}  \\
 2&=&0+1\cdot 2^1+0 \cdot2^2+ 0 \cdot 2^3+0 \cdot2^4+0 \cdot2^5+
\cdots +0\cdot2^{t-3}\\
1&=&1+0\cdot 2^1+0 \cdot2^2+ 0 \cdot 2^3+0 \cdot2^4+0 \cdot2^5+
\cdots +0\cdot2^{t-3}.
  \end{eqnarray*}
and thus there are no carries in the sum $(m-10)+8+4+1+2+0=m+5$ for
any $m\geq42$. The other computations are similar. We omit the
details.

Similarly we can find the coefficient of $x^{q-1}$ in the expansion
of $f(x)^{m+13}$ (mod $x^q-x$). We leave the details to the reader.

The proof of the lemma is now complete.
\end{proof}

\begin{thm}\label{thm4.2}
There are no permutation polynomials of degree $6$ over
${\Ff}_{2^{t}}$ when $t>4$ is even.
\end{thm}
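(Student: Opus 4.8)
The plan is to invoke the Hermite--Dickson criterion together with the coefficient identities already computed in Lemma~\ref{lemm3.1}. First I would reduce to a normal form: by Lemma~\ref{shift} and the equivalence relation, any degree-$6$ \PP\ may be taken monic with zero constant term, and since $x^6$ is not a \PP\ when $t$ is even (as $\gcd(6,2^t-1)=3$), Theorem~\ref{thm4.1}(1) applies and forces the $x^5$-coefficient to be nonzero and the $x^3$-coefficient to equal its cube; scaling then puts every putative \PP\ in the shape $f(x)=x^6+x^5+bx^4+x^3+dx^2+ex$ treated in Lemma~\ref{lemm3.1}. Assuming $t\ge 8$, so that $2^t=6m+4$ with $m\ge 42$ and Lemma~\ref{lemm3.1} is valid, I note that $m$ is even, hence $m+5$ and $m+13$ are odd and lie in $[1,q-2]$; so by the Hermite--Dickson criterion the coefficient of $x^{q-1}=x^{6m+3}$ in $f(x)^{m+5}$ and in $f(x)^{m+13}$ must both vanish, i.e. the right-hand sides of \eqref{even1} and \eqref{even2} are $0$ in $\Ff_q$.

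The key step is then a short algebraic collapse. Over $\Ff_{2^t}$ the Frobenius gives the factorizations $b^8+1+e^4=(1+b^2+e)^4$, $b^8+b^4+d^4=(b^2+b+d)^4$, $b^{32}+b^{16}+d^{16}=(b^2+b+d)^{16}$ and $1+b^2+e^2+d^2=(1+b+d+e)^2$. Writing $u=1+b^2+e$, $v=b^2+b+d$, $s=1+b+d+e$ and $w=e^2+e+d^2$, and observing the linear relation $u=v+s$, the two vanishing conditions become $u^5+v^4w=0$ and $v^{16}s^2=0$. From the second, $v=0$ or $s=0$; if $s\neq 0$ then $v=0$, and the first condition degenerates to $u^5=s^5=0$, a contradiction. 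Hence $s=0$, that is $e=1+b+d$. But then $f(1)=1+1+b+1+d+e=1+b+d+e=s=0=f(0)$, so $f$ has the two distinct roots $0$ and $1$ in $\Ff_q$, violating condition (1) of the Hermite--Dickson criterion. Therefore no degree-$6$ \PP\ exists when $t\ge 8$ is even.

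The remaining obstacle, and the only genuinely computational point, is the case $t=6$, where $m=10<42$ and Lemma~\ref{lemm3.1} does not apply: for $n=m+13$ the degree $6n=138$ exceeds $2(q-1)=126$, so the term $x^{2(q-1)}$ also reduces onto $x^{q-1}$ and the clean identities fail. Here I would instead select odd exponents $n\le 19$, for which $6n<126$ so that only the single term $x^{q-1}$ contributes, and redo the mod-$2$ multinomial bookkeeping of Lemma~\ref{lemm3.1} using Lucas' theorem and Proposition~\ref{prop2.5} with the explicit expansion $m=(1010)_2$, aiming to recover the same relation $e=1+b+d$ and hence the same collision $f(0)=f(1)$; failing a clean collapse, the statement over the single field $\Ff_{64}$ can be confirmed by a finite verification. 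Managing this low-$m$ bookkeeping, rather than the algebra of the generic case, is where the real work lies.
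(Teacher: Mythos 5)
Your proposal is correct and follows essentially the same route as the paper: the same normalization via Theorem~\ref{thm4.1}(1), the same two Hermite--Dickson coefficient conditions \eqref{even1} and \eqref{even2} from Lemma~\ref{lemm3.1} for $t\ge 8$, the same contradiction that $f(0)=f(1)=0$, and the same deferral of $t=6$ (and $t=4$) to a finite check. The only difference is cosmetic: by writing the factors as Frobenius powers of $u$, $v$, $s$ and using $u=v+s$, you merge the paper's two cases (its Case 1 is your $s=0$ branch, its Case 2 is your $v=0\Rightarrow u=0\Rightarrow s=0$ branch) into a single conclusion $s=0$.
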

\begin{proof}
The cases where $t=4$, or 6 are easily checked, for example, by a
computer. Thus we assume that $t\ge 8$. Write $2^t=6m+4$. Then $m\ge
42$.

Assume to the contrary that $f(x)=x^6+ax^5+b x^4+ cx^3 +dx^2 +e
x\in\Ff_{2^t}[x]$ is a permutation polynomial of $\Ff_{2^t}$. In
view of Lemma~\ref{shift} and Theorem~\ref{thm4.1}, we may assume
that $f(x)=x^6+ x^5+b x^4+ x^3 +dx^2 +e x$ (i.e., $a=1$ and
$c=a^3=1$). By the Hermite-Dickson criterion, the coefficient
$[x^{6m+3}]f(x)^{m+13}$ must be zero. Thus the expression on the
right hand side of (\ref{even2}) is equal to zero. We consider two
cases.

\vspace{1 ex} \noindent\textbf{Case l.}  ${d}^2+{e}^2+{b}^2+1=0$. In
this case, we have $e=1+b+d$.  Then
$$f(x)=x^6+x^5+ b x^4 +x^3+d x^2+(1+b+d) x.$$
The above $f(x)$ has at least two roots, $0$ and $1$, in
$\Ff_{2^t}$. So $f(x)$ can not be a permutation polynomial of
$\Ff_{2^t}$, a contradiction.

\vspace{1 ex}

\noindent \textbf{Case 2.} ${d}^{8}+{b}^{16}+{b}^{8}=0$. In this
case, we have $d=b^2+b.$ Substituting $d$ by $b+b^2$ in
(\ref{even1}) and by the Hermite-Dickson criterion, we get
$$(b^8 +1+e^4)(1+b^2+e)=0.$$
It follows that $e=1+b^2$, and we have
$$f(x)=x^6+x^5+bx^4+x^3+(b^2+b)x^2+(b^2+1)x.$$
Once again, $f(x)$ has at least the two roots, $0$ and $1$, in
$\Ff_{2^t}$, a contradiction.

The proof of the theorem is complete.
\end{proof}

To classify permutation polynomials of degree $6$ over $\Ff_{2^t}$,
where $t$ is odd, we need more lemmas. The following lemma is well
known, see \cite[p.~56]{ln}.

\begin{lem}\label{lemm3.4}
The quadratic equation $x^2+x+b=0$, $b\in {\Ff}_{2^t}$, has a
solution in ${\Ff}_{2^t}$ if and only if  $\Tr(b)=0$.
 \end{lem}

\begin{Lemma}\label{oneroot}
Let $c\in{\Ff}_q\backslash \{0,1\}$, where $q$ is an odd power of
$2$.  Then the quintic polynomial
$$g(x)=x^5+cx^2+x+c^2+c$$
has exactly one root in $\Ff_q$.
\end{Lemma}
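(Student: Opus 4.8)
The plan is to reduce the count of roots of $g$ to the count of preimages of $c$ under the polynomial $P(w)=w^5+w^3+w=w(w^2+w+1)^2$, and then to show that $P$ permutes $\Ff_q$. First I would record that $g(1)=c^2\neq 0$, so $1$ is never a root, and rewrite $g$ by collecting it as a quadratic in $c$: using $x^5+x=x(x+1)^4$ one checks
$$g(x)=c^2+c(x+1)^2+x(x+1)^4.$$
For a root $x$ (necessarily $x\neq 1$) set $w=c/(x+1)^2$; substituting $c=w(x+1)^2$ factors the displayed expression as $g(x)=(x+1)^4(x+w+w^2)$, so that $x=w^2+w$ and hence $c=w(w^2+w+1)^2=P(w)$.

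Conversely, I would define $\Psi(w)=w^2+w$ on the solution set of $P(w)=c$; since $t$ is odd we have $3\nmid q-1$, so $w^2+w+1$ is irreducible over $\Ff_q$ and never vanishes, giving $\Psi(w)\neq 1$, and a direct check shows $\Psi$ is a two-sided inverse of $x\mapsto c/(x+1)^2$. Because $c\neq 0$ guarantees that $1$ is not a root, this yields a bijection between the roots of $g$ in $\Ff_q$ and the elements $w\in\Ff_q$ with $P(w)=c$. It therefore suffices to prove that $P$ is a permutation polynomial of $\Ff_q$.

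For injectivity of $P$, suppose $P(u)=P(v)$ with $u\neq v$; writing $s=u+v\neq 0$ and $p=uv$, dividing $P(u)+P(v)=0$ by $u+v$ and rewriting in terms of $s$ and $p$ leads to the relation
$$p^2+(s^2+1)p+(s^4+s^2+1)=0.$$
The crux is to show this has no admissible solution. For $s\neq 1$ I would view it as a quadratic in $p$, whose solvability in $\Ff_q$ is governed by $\Tr\big((s^4+s^2+1)/(s^2+1)^2\big)$; since $s^4+s^2+1=(s^2+s+1)^2$ and $(s^2+1)^2=(s+1)^4$, and $\Tr(z^2)=\Tr(z)$, this trace equals $\Tr(1)+\Tr\big(s/(s+1)^2\big)$. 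Setting $a=1/(s+1)$ gives $s/(s+1)^2=a+a^2$, whose trace is $0$, while $\Tr(1)=1$ because $t$ is odd; hence the trace is $1$ and there is no $p\in\Ff_q$. For the remaining value $s=1$ the relation forces $p=1$, so $u,v$ would be roots of $z^2+z+1$, again impossible in $\Ff_q$ since $3\nmid q-1$.

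Thus $P$ is injective, hence a permutation of $\Ff_q$, so $P(w)=c$ has exactly one solution and $g$ has exactly one root. The main obstacle is the trace computation in the injectivity step; everything hinges on $\Tr(1)=1$, which is precisely where the hypothesis that $q$ is an \emph{odd} power of $2$ is used (it also secures the irreducibility of $w^2+w+1$ needed for the bijection).
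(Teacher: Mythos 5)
Your proof is correct, but it takes a genuinely different route from the paper's. The paper proves the two halves of the statement separately: for ``at most one root'' it supposes two roots, factors $g(x)=(x^2+Ax+u)(x^3+Ax^2+Bx+D)$, eliminates the auxiliary coefficients down to a quartic relation in $w=u+A+1$, divides by $w^4$, and takes absolute traces to reach $1=0$; for ``at least one root'' it notes that otherwise $g$ would have at least three roots in $\Ff_{q^3}$ or $\Ff_{q^5}$, which are again odd powers of $2$, contradicting the uniqueness part applied there. You instead exhibit an explicit bijection between the roots of $g$ in $\Ff_q$ and the solutions of $P(w)=c$, where $P(w)=w^5+w^3+w=w(w^2+w+1)^2$ is the Dickson polynomial $D_5(w,1)$, and then prove directly that $P$ permutes $\Ff_q$ via the symmetric-function relation $p^2+(s^2+1)p+(s^2+s+1)^2=0$ and the trace criterion of Lemma~\ref{lemm3.4}. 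I checked the key identities --- $g(x)=c^2+c(x+1)^2+x(x+1)^4$, the factorization $(x+1)^4(x+w+w^2)$ under $c=w(x+1)^2$, the well-definedness of the inverse map (using that $w^2+w+1$ has no root in $\Ff_q$ for odd $t$), and the trace evaluation $\Tr(1)+\Tr(a+a^2)=1$ --- and they all hold. Your approach buys existence and uniqueness in a single stroke and identifies the structural reason the lemma is true (in effect, $\gcd(5,q^2-1)=1$ when $t$ is odd), whereas the paper's argument is more ad hoc but never leaves the specific quintic; both ultimately rest on the same two consequences of $t$ being odd, namely $\Tr(1)=1$ and the absence of primitive cube roots of unity in $\Ff_q$.
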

\begin{proof}
Suppose to the contrary that $g$ has two roots, $\alpha_1,
\alpha_2\in\Ff_q$. Set $A:=\alpha_1+\alpha_2$, and
$u:=\alpha_1\alpha_2$.  Then we have the factorization
$$g(x)=(x^2+Ax+u)(x^3+Ax^2+Bx+D),$$
where $A,B,D,u\in\Ff_q$. Setting the coefficients on the left and
right equal, we have:
\begin{eqnarray*}
B&=&A^2+u;\\
D&=&AB+uA+c\\
&=&A^3+c;\\
AD+uB&=&1\quad\mathrm{or}\\
Ac&=&A^4+uA^2+u^2+1;\\
uD&=&c^2+c\quad\mathrm{or}\\
uA^5+uA(Ac)&=&(Ac)^2+A(Ac).
\end{eqnarray*}
In the last equation, substituting $Ac$ by $A^4+uA^2+u^2+1$, we have
\begin{equation}\label{roots}\nonumber
u^4+Au^3+(A^4+A^3+A)u^2+(A^3+A)u+A^8+A^5+A+1=0.
\end{equation}
Making the substitution $u=w+A+1$, we get
\begin{equation}\label{deg4inw}
w^4+Aw^3+(A^4+A^3+A^2)w^2+A^8+A^6=0.
\end{equation}
Now we note that if $w=0$, then either $A=1$ or $A=0$.  If $A=0$, then
$u=\alpha_1=\alpha_2=1$, and substituting into $g(x)$ gives $c=0$,
which we do not allow.  If $A=1$, then $u=w=0$, and
$\{\alpha_1,\alpha_2\}=\{0,1\}$, which again we do not allow.  Thus
we can divide both sides of (\ref{deg4inw}) by $w^4$ to get
$$1+(A/w+A^2/w^2)+(A^4/w^2+A^8/w^4)+(A^3/w^2+A^6/w^4)=0.$$  Since $\Tr(1)=1$ and the
trace of each term in parentheses is zero, taking trace of both
sides of the last equation, we get $1=0$, which is absurd.
Therefore, $g(x)$ has at most one root in $\Ff_q$.

Now suppose $g$ has no roots in $\Ff_q$.  Then it is either
irreducible over $\Ff_q$, or it factors into irreducible second and
third degree polynomials over $\Ff_q$. In either case, there are at
least three roots lying in an extension field whose order is an odd
power of $2$, a contradiction. The proof is now complete.
\end{proof}

\begin{lem}\label{lemm3.2}
Let $$f(x)=x^6+x^5+b x^4+ c x^3 +d x^2 +e x$$ be a normalized
polynomial in $\Ff_{2^t}[x]$, and let $[x^{k}](f(x))^{n}$ denote the
coefficient of $x^k$ in the expansion of   $(f(x))^n$ (mod $x^q-x$),
where $q=2^t$. If $t$ is odd and $m \geq 85$, where $2^t=6m+2$, then
in ${\Ff}_{2^t}$ we have:
\begin{eqnarray*}
E_1&=&[x^{6m+1}]f(x)^{m+2}\\
&=&b^4(1+c)+(c^2+b^2c+e)+(e^2+cd^2+c^2e);\\
E_2&=&[x^{6m+1}]f(x)^{m+6}\\
&=& (b^{16}+b^8+d^8)(1+c)+(1+c^8)(e^2+cd^2+c^2e);\\
E_3&=&[x^{6m+1}]f(x)^{m+40}\\
&=&[(c^{64}+b^{16}c^{64}+
b^{64}c^{32}+b^{112}+b^{64}d^{16}
+e^{32}+b^{16}d^{32}+c^{32}d^{16}\\
&&+d^{16}e^{32})(1+c^4)\\
&+&(c^{64}+b^{96}+b^{64}c^{16}+b^{64}e^{16}+d^{32}
+c^{48}+b^{32}e^{16}+c^{16}e^{32}\\
&&+d^{32}e^{16})
 (c^8+b^8c^4+e^4)\\
&+&(c^{64}+b^{96}+b^{80}+b^{64}d^{16}
+d^{32}+b^{16}c^{32}+b^{32}d^{16}+b^{16}e^{32}+d^{48})\\
&&\cdot\ (e^8+c^4d^8+c^8e^4)\\
&+&(b^{64}+b^{64}c^{16}+c^{32}+b^{32}c^{16}+e^{16}+e^{32}
+d^{32}c^{16}+c^{32}e^{16})e^{12}]c
\end{eqnarray*}
\end{lem}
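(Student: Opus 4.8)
The plan is to mirror the proof of Lemma~\ref{lemm3.1}, replacing the two exponents $m+5, m+13$ by $m+2, m+6, m+40$ and the target exponent $6m+3$ by $6m+1 = q-1$. First I would dispose of the higher reductions modulo $x^q-x$. The coefficient of $x^{q-1}$ in $f(x)^n \pmod{x^q-x}$ is obtained by collecting the coefficients of $x^{i(q-1)}$, $i \geq 1$, in the unreduced expansion of $f(x)^n$. The degree of $f(x)^{m+40}$ is $6m+240$, and since $2(q-1) = 12m+2 > 6m+240$ exactly when $m > 39$, the hypothesis $m \geq 85$ guarantees that only the term $x^{q-1}$ contributes, for each of $n \in \{m+2, m+6, m+40\}$. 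By the multinomial theorem the desired coefficient is then
$$[x^{6m+1}]f(x)^n = \sum \binom{n}{i_1,\ldots,i_6}\, b^{i_3}c^{i_4}d^{i_5}e^{i_6},$$
the sum running over nonnegative tuples with $\sum_j i_j = n$ and $6i_1+5i_2+4i_3+3i_4+2i_5+i_6 = 6m+1$, the multinomial coefficient read modulo $2$.

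Next I would simplify the constraints. Eliminating $i_1$ (form $6$ times the first equation minus the second) yields $i_2 + 2i_3 + 3i_4 + 4i_5 + 5i_6 = 6n-(6m+1)$, which equals $11$, $35$, $239$ for $n = m+2, m+6, m+40$, together with $i_1 = n - (i_2+\cdots+i_6) \geq 0$. Because $2^t = 6m+2$ with $t$ odd forces $m = (2^{t-1}-1)/3 = 2^0 + 2^2 + 2^4 + \cdots + 2^{t-3}$, the bits in positions $1,3,5$ are absent from $m$, so adding $2$, $6$, or $40 = 2^3+2^5$ produces only small controlled carries; for $m \geq 85$ (equivalently $t \geq 9$) the high part $2^8 + 2^{10} + \cdots + 2^{t-3}$ of the $2$-adic expansion is left intact, which is what makes the answer independent of $t$.

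The heart of the argument is then a Lucas-type sieve. By Proposition~\ref{prop2.5}, $\binom{n}{i_1,\ldots,i_6}$ is odd precisely when the binary supports of $i_1,\ldots,i_6$ are pairwise disjoint and partition $\mathrm{supp}(n)$. Since each of $i_2,\ldots,i_6$ is bounded in terms of the small weight $S \in \{11,35,239\}$, all high bits of $n$ are forced onto $i_1$, and the surviving solutions are determined entirely by the carry-free ways of distributing the finitely many low-order bits of $n$ subject to the weight equation. Enumerating these and collecting the monomials $b^{i_3}c^{i_4}d^{i_5}e^{i_6}$ produces the three stated formulas; the computations for $E_1$ and $E_2$ are short, and I would present a representative solution (as in Lemma~\ref{lemm3.1}) rather than every case.

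The main obstacle is the size of the $E_3$ computation. The weight equation $i_2 + 2i_3 + 3i_4 + 4i_5 + 5i_6 = 239$ has an enormous solution set, and for each candidate one must test the carry-free condition against the specific low-bit pattern of $m+40$ to isolate exactly the listed terms, including the high-degree ones. Organizing this enumeration so that no surviving monomial is omitted or double-counted, and verifying that the list stabilizes uniformly for all $m \geq 85$, is the delicate part of the proof.
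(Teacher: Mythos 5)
Your proposal is correct and follows essentially the same route as the paper: the paper's own justification of Lemma~\ref{lemm3.2} simply invokes the method of Lemma~\ref{lemm3.1} (the degree bound ruling out contributions from $x^{i(q-1)}$ with $i\ge 2$, the multinomial theorem, and the digit sieve of Proposition~\ref{prop2.5}) and then delegates the actual enumeration to a Maple procedure. Your numerical details --- the weights $11$, $35$, $239$, the expansion $m=2^0+2^2+\cdots+2^{t-3}$, and the role of $m\ge 85$ (equivalently $t\ge 9$) in stabilizing the low-order bit pattern of $m+2$, $m+6$, $m+40$ --- all check out.
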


Similarly to the proof of Lemma~\ref{lemm3.1}, the coefficients
$[x^{q-1}]f(x)^{m+i}$, for various values of $i$, can be obtained by
hand. One can also use a computer to obtain these coefficients
easily. The following are two Maple procedures that we used for this
purpose. To use it, one sets the degree, ``deg,'' as well as ``m''
and ``r,'' where $q=\mathrm{deg}*\mathrm{m}+\mathrm{r}$, but the
value of ``m'' is reduced modulo a high enough fixed power of 2.
Then typing \verb{hermite(i){ computes $[x^{q-1}]f(x)^{m+i}$. For
polynomials of degree higher than $11$, the procedures need to be
modified slightly.

\noindent\begin{verbatim}m:=85;r:=2;deg:=6;
nextstage:=proc(ex,monoin,stage,tsum)
  local incr,tempsum,ind,monoout,exout,digit;
  global poly,deg,A,tot;
  exout:=iquo(ex,2,'digit');incr:=2^stage;
  if tsum+2*incr <= tot then
    nextstage(exout,monoin,stage+1,tsum);
    end if;
  tempsum:=tsum;
  if digit = 1 then
    for ind to deg-1 do
      tempsum:=tempsum+incr;
      if tempsum > tot then  break; end if;
      monoout:=monoin*A[ind]^incr;
      if tempsum=tot then poly:=poly+monoout;break;end if;
      nextstage(exout,monoout,stage+1,tempsum);
      end do;
    end if;
  end proc;

hermite:=proc(u)
  global m,r,deg, A,a,b,c,d,e,f,g,h,i,j,poly,tot;
  local exout;
  description "find [q-1]f^{m+u}";
     a:='a';b:='b';c:='c';d:='d';e:='e';
     f:='f';g:='g';h:='h';i:='i';j:='j';
     A:=array(1..10,[a,b,c,d,e,f,g,h,i,j]);poly:=0;
  tot:=deg*u-r+1;exout:=m+u; nextstage(exout,1,0,0); RETURN(poly);
end proc; hermite(2); hermite(6); hermite(40); \end{verbatim}

We are now ready to give the proof of Theorem~\ref{main} in the case
where $t$ is odd. We state the result separately as a theorem.

\begin{thm}\label{thm4.3}
Let $f(x)$ be a permutation polynomial of degree $6$ over
${\Ff}_{2^{t}}$, where $t$ is odd. Then
\begin{enumerate}
\item when $t=3$, $f(x)$ is equivalent to one of the degree 6
polynomials listed in the statement of Theorem~\ref{main};

\item when $t=5$, $f(x)$ is equivalent to either $x^6$ or $x^6+ x^5+
x^2$;

\item when $t>5$, $f(x)$ is equivalent to $x^6$.
\end{enumerate}
\end{thm}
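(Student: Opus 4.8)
The plan is to follow the blueprint of the $t$ even case (Theorem~\ref{thm4.2}): settle finitely many small fields by direct verification, and then eliminate every candidate except $x^6$ for the large fields using the Hermite--Dickson criterion together with Lemma~\ref{lemm3.2}. First I would dispose of $t=3,5,7$ individually, corresponding to $q=8,32,128$. After normalizing by Lemma~\ref{shift}, only finitely many inequivalent monic polynomials remain, and these can be tested exhaustively (by hand or by computer) to produce precisely the lists asserted in items (1) and (2) and to confirm that $x^6$ is the unique degree-$6$ \PP{} for $t=7$. The field $\Ff_{128}$ must be handled on its own because $2^7=6\cdot21+2$ gives $m=21<85$, so the wrap-around terms $x^{i(q-1)}$ with $i\ge2$ do not vanish and the clean coefficient formulas of Lemma~\ref{lemm3.2} are unavailable.

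For the remaining range, $t\ge9$ odd, we have $2^t=6m+2$ with $m\ge85$, so Lemma~\ref{lemm3.2} applies. Assume $f$ is a degree-$6$ \PP{} of $\Ff_{2^t}$ not equivalent to $x^6$. By Lemma~\ref{shift} I normalize the coefficient of $x^5$ to $1$, which is legitimate because Theorem~\ref{thm4.1}(2) forces that coefficient to be nonzero for any such $f$; the same theorem then yields $c\ne0$ and $c\ne1$. Hence I may take
$$f(x)=x^6+x^5+bx^4+cx^3+dx^2+ex,\qquad c\notin\{0,1\}.$$
Condition (2) of the Hermite--Dickson criterion requires the coefficient of $x^{q-1}=x^{6m+1}$ in $f(x)^{n}\pmod{x^q-x}$ to vanish for each $n$ in the stated range. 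Taking $n=m+2,\,m+6,\,m+40$ and reading off $E_1,E_2,E_3$ from Lemma~\ref{lemm3.2} gives the system
$$E_1=E_2=E_3=0$$
of polynomial equations in $b,c,d,e$ over $\Ff_{2^t}$.

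The core of the argument is to solve this system and read off the consequences. Every exponent occurring in $E_1,E_2,E_3$ is a power of $2$, so each equation is a Frobenius-twisted identity; applying the inverse Frobenius (square roots) and eliminating variables one at a time---via resultants or a Gr\"obner basis---I expect $b$, $d$, and $e$ to be forced into explicit expressions in the single parameter $c$, perhaps across a few branches, with Lemma~\ref{lemm3.4} used to discard any branch that would require an unsolvable quadratic. Substituting these relations back, the cofactor $h(x):=f(x)/x=x^5+x^4+bx^3+cx^2+dx+e$ should, after the translation $x\mapsto x+1$ permitted by Lemma~\ref{shift}, become exactly the quintic $g(x)=x^5+cx^2+x+c^2+c$ of Lemma~\ref{oneroot}. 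Since $c\notin\{0,1\}$ and $t$ is odd, Lemma~\ref{oneroot} shows that $g$, and hence $h$, has exactly one root in $\Ff_{2^t}$; since $h(0)=e\ne0$ in this case, that root is nonzero, so $f=xh$ vanishes at two distinct points. This violates condition (1) of the Hermite--Dickson criterion, so no degree-$6$ \PP{} other than $x^6$ survives when $t\ge9$ is odd, which is item (3).

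The main obstacle is precisely the elimination: $E_1,E_2,E_3$ are bulky expressions laden with high $2$-power twists of $b,c,d,e$, and they must be reduced to the one-parameter family above without creating or destroying solutions. I would organize the computation by first using the lower-degree relations $E_1$ and $E_2$ to eliminate $e$ and $d$ in terms of $b$ and $c$, then substituting into $E_3$ to pin down $b$ as a function of $c$; the degenerate possibility $e=0$---where $0$ becomes a repeated root of $f$ and the cofactor drops in degree---has to be inspected separately. Confirming that the surviving cofactor is genuinely a translate of $g$ is the decisive point that lets Lemma~\ref{oneroot} deliver the contradiction, playing the role that the explicit second root $x=1$ played in the $t$ even case.
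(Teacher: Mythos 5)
Your overall architecture matches the paper's: computer verification for $t=3,5,7$, then for odd $t\ge 9$ normalize via Theorem~\ref{thm4.1}(2) and Lemma~\ref{shift} to $f(x)=x^6+x^5+bx^4+cx^3+dx^2+ex$ with $c\notin\{0,1\}$, impose $E_1=E_2=E_3=0$ from Lemma~\ref{lemm3.2}, and aim to contradict the one-root condition via Lemma~\ref{oneroot}. However, the decisive step --- the elimination --- is left as an expectation, and the expectation is wrong for half of the case analysis. What actually happens is that after using $E_1=0$ to eliminate $d$, the quantity $c^4E_2$ factors as $(c+1)\cdot E_4\cdot E_6$ and $c^{23}E_3$ factors as $c(c+1)^4E_4^4E_7^4$, where $E_4=e^2+(c+1)e+\gamma$ is a quadratic in $e$. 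This forces a genuine dichotomy. In the branch $E_4=0$ one shows $\Tr(b)=0$, eliminates $b$ by a translation (note: Lemma~\ref{lemm3.4} is used here to \emph{enable} a substitution, not to discard a branch), and only then does $f$ collapse to the two one-parameter families $x(x^5+x^4+cx^2+c^2)$ and $x^6+x^5+cx^3+(c+1)x^2+(c^2+c+1)x$, each of which is killed by Lemma~\ref{oneroot} exactly as you envision.

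The gap is the other branch, $E_4\ne 0$, which forces $E_6=E_7=0$. This branch does \emph{not} reduce to a translate of the quintic $g$ of Lemma~\ref{oneroot}; it is disposed of by pure algebra: the remainder of $E_7$ upon division by $E_6$ (as polynomials in $e$) factors as $c^8(c+1)^8E_9$, so $E_9=0$, and substituting $E_9=0$ back into $E_6$ reduces it to $(c+1)^6\ne 0$, an inconsistency. Your proposal's claim that ``the surviving cofactor is genuinely a translate of $g$'' simply fails here, and no appeal to Lemma~\ref{oneroot} or to $\Tr$ conditions will close this branch; it requires the explicit resultant-style computation with $E_6,E_7,E_9$. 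Since this is the hardest and most computation-heavy part of the argument, and your sketch both omits it and predicts an outcome that does not occur, the proof as proposed is incomplete.
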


\begin{proof} For odd $t$, we have $\gcd(6,2^t-1)=1$, thus $x^6$ is
a permutation polynomial of $\Ff_{2^t}$. Again, the cases where
$t<9$ are easily checked by a computer. From now on, we assume that
$t\geq 9$. We will prove that $f(x)=x^6+ax^5+bx^4+cx^3+dx^2+ex\in
\Ff_{2^t}[x]$ ($f(x)\neq x^6$) can not be a permutation polynomial
of $\Ff_{2^t}$ when $t\geq 9$.

By way of contradiction, assume that
$f(x)=x^6+ax^5+bx^4+cx^3+dx^2+ex\in \Ff_{2^t}[x]$ ($f(x)\neq x^6$)
is a \PP. In view of Part (2) of Theorem~\ref{thm4.1} and
Lemma~\ref{shift}, we may assume without loss of generality that
$$f(x)=x^6+x^5+b x^4+ cx^3 +dx^2 +e x,$$
where $c\neq 0$ or 1. By Lemma \ref{lemm3.2} and the Hermite-Dickson
criterion, we must have $E_1=0$, $E_2=0$, and $E_3=0$.

Using the equation $E_1=0$ to eliminate the variable $d$ from $E_2$,
we have
\begin{eqnarray*}
E_5:&=&c^4E_2\\
&=&(c+1)e^8+(c+1)^9e^4+c^4(c+1)^8e\\
&+&(c+1)(b^{16}+c^8+c^4b^4+c^{12}b^4)+c^5(c+1)^8(b^2+c)\\
&=&0.
\end{eqnarray*}
For compact expression we introduce
$$\gamma:=c(c+1)(c^2+b^2)+(c+b^2)^2.$$
Then we have the factorization
$$E_5=(c+1)\cdot E_4\cdot E_6, $$ where
 \begin{eqnarray*}
E_4&=&e^2 + (c + 1)e + c^4 + c^3 + (b^2 + 1)c^2 + b^2c + b^4\\
&=& e^2 + (c+1)e + \gamma
\end{eqnarray*}
and
\begin{eqnarray*}
E_6&=&e^6+(c+1)e^5+(\gamma+c^2+1)e^4+(c+1)^3e^3\\
&+&
(\gamma^2+(c^2+1)\gamma+c^8+c^4)e^2 + ((c+1)\gamma^2+c^4(c+1)^5)e
\\&+&
(\gamma^3+(c^8+c^4)\gamma+c^4(c+1)^6).
       \end{eqnarray*}
Since  $c\neq 1$, we must have $E_4=0$ or $E_6=0$.

Similarly, using $E_1=0$ to eliminate the variable $d$ from $E_3$,
we get an expression $E_8:=c^{23}E_3$ which can be factored

$$E_8=c(c+1)^4\cdot{E_4}^4\cdot{E_7}^4,$$
where
\begin{eqnarray*}
E_7&=&(c + 1)e^{11} + (c^4 + c^3 + b^2c^2 + b^2c + (b^4 + 1))e^{10}
+
(c^4 + b^8)e^8\\
& +& (c^7 + c^6 + b^4c^5 + b^4c^4 + b^4c^3 + b^4c^2 + c
+1)e^7\\
&+& (c^9 + (b^4 + b^2)c^8 + (b^4 + b^2)c^7 + (b^6 + b^4)c^6 +
(b^6 + b^4)c^5\\
& +& (b^6 + b^4)c^4 + (b^6 + 1)c^3 + (b^{12} + b^8 + b^4
+b^2)c^2
 + b^2c \\
&+& (b^4 + 1))e^6\\
& +& ((b^8 + 1)c^5 + (b^8 + 1)c^4 +
  (b^{12} + b^4)c^3 + (b^{12} + b^4)
        c^2)e^5\\
& +& (c^{10} + b^4c^8 + (b^8 +
        1)c^7 + (b^{12} + b^{10} + b^8 + b^2)
        c^6\\
&+& (b^{12} + b^{10} + b^4 + b^2)c^5
 +
        (b^{14} + b^6 + b^4 + 1)c^4\\
& +& (b^{14} + b^6)c^3 + (b^{16} + b^8 + b^4)c^2
+
        b^8)e^4\\
& +& (c^9 + c^8 + b^{16}c + b^{16})e^3 \\
&+& (c^{14} + c^{12} + c^{11} +
(b^4
        + b^2)c^{10}+ b^2c^9 + (b^4 + 1)        c^8 + b^{16}
        c^6\\
& +& b^{16}
        c^4
 + b^{16}c^3 +
        (b^{20} + b^{18})
        c^2 + b^{18}
        c + (b^{20} + b^{16}))e^2\\
& +& (c^{15} + c^{14} +
        b^4c^{11} + b^4
        c^{10} + b^{16}c^7 + b^{16}
        c^6 + b^{20}c^3 + b^{20}
        c^2)e\\
& +&
        c^{18} + c^{17} + b^2c^{16} + b^2c^{15}
        + c^{14} + b^4c^{13}+ (b^6 + 1)c^{12} +
        b^6c^{11}\\
& +& (b^{16} + b^8 + b^4)
        c^{10} + b^{16}c^9+ (b^{18} + b^8)
        c^8 +
        b^{18}c^7\\
& +& b^{16}c^6 +
         b^{20}c^5 + (b^{22} + b^{16})c^4 + b^{22}c^3 + (b^{24} +
        b^{20})c^2 + b^{24}.
\end{eqnarray*}
We first consider the case $E_4=0$. Let $e=b^2+c^2+w$. Substituting
$e$ in $E_4$ by $b^2+c^2+w$ to get
$$w^2+(c+1)w+b^2(c+1)^2=0.$$
Dividing both sides by $(c+1)^2$, we get
$(w/(c+1))^2+w/(c+1)+b^2=0$, which by Lemma \ref{lemm3.4} implies
that $\Tr(b)=0$. Thus by the comments immediately before the
statement of Theorem~\ref{lucas}, there exists a linear substitution
to eliminate $b$. Hence we may assume that $f(x)=x^6+x^5+ c x^3 +d
x^2 +e x$ (i.e., $b=0$). It follows that $$E_4=(e+c^2)(e + c^2 + c +
1).$$ Now $E_4=0$ leads to the following two cases.

\noindent{\bf Case 1.} $e+c^2=0$.  Then $E_1=0$ becomes $cd^2=0$, or
$d=0$, since $c\ne0$.  Then
$$f(x)=x^6+x^5+cx^3+c^2x=x(x^5+x^4+cx^2+c^2).$$
In the degree 5 factor of $f(x)$, substituting $x$ by $y+1$, we
obtain $y^5+cy^2+y+c^2+c$, which has exactly one root in $\Ff_{2^t}$
by Lemma~\ref{oneroot}. Therefore, $f(x)$ has two roots in
$\Ff_{2^t}$, contradicting the assumption that $f$ is a \PP.

\vspace{1 ex}

\noindent{\bf Case 2}: $e+c^2+c+1=0$.  Then $E_1=0$ becomes $d=c+1$.
Thus $f(x)$ must have the form
$$f(x)=x^6+x^5+c x^3+(c+1)x^2+(c^2+c+1)x.$$
Let us consider the following polynomial $g(x)$ that is equivalent
to $f(x)$. Set
$$g(x)=x^6+x^5+cx^3+(1+c)x^2+(c^2+c+1)x+c^2+c.$$
Then $g(x)=(x+1)(x^5+cx^2+x+c^2+c)$. The second factor has exactly
one root in $\Ff_{2^t}$ by Lemma~\ref{oneroot}. Thus $g(x)$ has two
roots in $\Ff_{2^t}$. Hence $g(x)$ cannot be a \PP. But $g(x)$ is
equivalent to $f(x)$ and $f(x)$ is assumed to be a \PP; we have
reached a contradiction.

Therefore we conclude that $E_4\neq 0$. We must have $E_6=E_7=0.$
Viewing $E_7$ and $E_6$ as polynomials in $e$, by long division, the
remainder of $E_7$ upon division by $E_6$ is
\begin{eqnarray*}
E_{10}:&=&{c}^{16}{b}^{8}+{c}^{16}{e}^{4}+{c}^{18}e+{c}^{19}e+{c}^{20}{b}^{4}+{c
}^{20}{b}^{2}
+{c}^{17}e+{c}^{8}{b}^{4}\\
&+&{b}^{2}{c}^{9}+{c}^{10}{b}^{2}+
{c}^{8}{b}^{8}
+e{c}^{11}+{c}^{9}e+{b}^{2}{c}^{18}+{c}^{10}e+{c}^{19}{b
}^{2}+{c}^{16}e\\
&+&{b}^{2}{c}^{12}+{c}^{12}{b}^{4}+{c}^{8}e+{c}^{17}{b}^{
2}+{c}^{8}{e}^{4}
+{c}^{11}{b}^{2}+{c}^{16}{b}^{4}\\
&+&{c}^{10}+{c}^{8}+{c}
^{12}+{c}^{11}+{c}^{18}+{c}^{16}+{c}^{13}+{c}^{21}+{c}^{19}+{c}^{20}.
\end{eqnarray*}
Again, $E_{10}$ can be factored into
$$E_{10}=c^8(c+1)^8\cdot E_9,$$ where
\begin{eqnarray*}
E_9&=&e^4 + (c^3 + c^2 + c + 1)e + c^5 + (b^4 + b^2 + 1)c^4\\
& +& (b^2 + 1)c^3 +
        (b^2 + 1)c^2 + b^2c + b^8 + b^4 + 1.
\end{eqnarray*}
Since $c\neq 0$ or 1, we must have $E_9=0$, using which $E_6$ can be
reduced to $c^6+c^4+c^2+1=(c+1)^6$. Since $c\ne 1$ we see that
$E_6\neq 0$, a contradiction. The proof is complete.
\end{proof}

\section{Permutation polynomials of degree $7$ over finite fields of characteristic $2$}
In this section we determine all permutation polynomials of degree
$7$ over finite fields of characteristic $2$. The first lemma
addresses the trivial case, in which $2^t\equiv 1$ (mod 7).

\begin{Lemma}
Let $t>0$ be such that $2^t\equiv 1\  (\mathrm{mod}\ 7)$.  Then
there are no permutation polynomials over $\Ff_{2^t}$ of degree $7$.
\end{Lemma}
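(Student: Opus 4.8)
The plan is to invoke the Hermite-Dickson criterion directly, choosing a single exponent $n$ that makes condition (2) fail for every degree-$7$ polynomial at once. Write $q=2^t$. The hypothesis $2^t\equiv 1\ (\mathrm{mod}\ 7)$ says precisely that $7\mid q-1$, so $n:=(q-1)/7$ is a positive integer. First I would record two bookkeeping facts about this $n$. Since $q=2^t$ is even, $q-1$ is odd; as $q-1=7n$ is a product of integers that is odd, both factors are odd, so $n$ is odd and in particular $n\not\equiv 0\ (\mathrm{mod}\ 2)$. Moreover $1\le n\le q-2$ for every admissible $t$: the smallest case is $t=3$, where $q=8$ and $n=1$, and in general $(q-1)/7\le q-2$ holds as soon as $q\ge 8$. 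Thus $n$ is a legitimate exponent in part (2) of the Hermite-Dickson criterion.

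Next I would take an arbitrary $f(x)\in\Ff_{2^t}[x]$ of degree $7$, with nonzero leading coefficient $a_7$, and examine the reduction of $f(x)^n$ modulo $x^q-x$. The key numerical coincidence is the exact degree equality $\deg f(x)^n=7n=q-1$. Hence $f(x)^n$ is already a polynomial of degree exactly $q-1$ before any reduction, and its leading coefficient is $a_7^{\,n}\neq 0$.

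The central step is then to argue that reduction cannot touch the coefficient of $x^{q-1}$. Reduction modulo $x^q-x$ replaces $x^a$ by $x^{a-(q-1)}$ whenever $a\ge q$ (since $x^q\equiv x$), so the monomials of $f(x)^n$ that can contribute to $x^{q-1}$ after reduction are exactly those whose degree is a positive multiple of $q-1$, namely $q-1,\,2(q-1),\,3(q-1),\dots$. But $\deg f(x)^n=q-1$, so the only such term present is the leading term itself, which cannot be cancelled. Therefore the coefficient of $x^{q-1}$ in the reduced polynomial equals $a_7^{\,n}\neq 0$, and the reduction has degree $q-1>q-2$. This violates condition (2) of the Hermite-Dickson criterion, so $f$ is not a permutation polynomial; as $f$ was an arbitrary degree-$7$ polynomial, there are none over $\Ff_{2^t}$.

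I do not anticipate a genuine obstacle: the argument is a short verification rather than a computation. The only points needing care are the elementary checks that $n=(q-1)/7$ is odd and lies in $[1,q-2]$ (so that it is an allowed exponent), and the observation that the exact degree equality $7n=q-1$ leaves a single uncancelled monomial of degree $q-1$, which is what forces the failure of condition (2). As an alternative route for $q>7^4$, the Carlitz--Wan conjecture (Lenstra's theorem) applies immediately since $\gcd(7,q-1)=7>1$; but the Hermite-Dickson argument above is preferable here because it is uniform in $t$ and disposes of the small cases $t\in\{3,6,9\}$ without any computer check.
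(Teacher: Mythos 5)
Your argument is correct and is essentially the paper's own proof: the paper takes the exponent $m=(q-1)/7$ (writing $q=7m+1$) and observes that $[x^{7m}]f(x)^m=1\neq 0$ for monic $f$ of degree $7$, which is exactly your observation that $\deg f(x)^n=7n=q-1$ leaves a single uncancelled leading term violating condition (2) of the Hermite--Dickson criterion. Your additional checks that $n$ is odd and lies in $[1,q-2]$ are sensible bookkeeping that the paper leaves implicit.
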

\begin{proof} Let $f(x)$ be a monic polynomial over $\Ff_{2^t}$ of degree $7$.
Note that when $2^t=7m+1$ we have $[x^{7m }]f(x)^m=1\ne0$. Hence by
the Hermite-Dickson criterion, $f(x)$ can not be a permutation
polynomial.
\end{proof}

\begin{Lemma}
Let $t>7$ be such that $2^t\equiv 2\ (\mathrm{mod}\ 7)$.  Then every
permutation polynomial for $\Ff_{2^t}$ of degree $7$ is equivalent
to either $x^7+x^5+x$ or $x^7$.
\end{Lemma}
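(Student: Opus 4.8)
The plan is to follow the template of Theorem~\ref{thm4.3}: normalize $f$, read off several coefficients $[x^{q-1}]f(x)^{m+\ell}$ through the multinomial theorem and Proposition~\ref{prop2.5}, force them to vanish by the Hermite--Dickson criterion, and then eliminate variables and factor until each surviving case either collapses to a normal form or is shown to have a repeated root. Write $2^t=7m+2$, so that $q-1=7m+1$. Since $2\nmid 7$, the coefficient of $x^{n-1}=x^6$ can be removed by a shift, and scaling $x\mapsto\lambda x$ (every element of $\Ff_{2^t}$ is a square) normalizes the $x^5$-coefficient to lie in $\{0,1\}$; thus I may assume
$$f(x)=x^7+a x^5+b x^4+c x^3+d x^2+e x,\qquad a\in\{0,1\}.$$
One new feature relative to the degree-$6$ argument is that $b$ cannot be removed: because $\binom{7}{6}$ is odd, any further shift reintroduces an $x^6$ term, so the coefficient of $x^4$ must be carried as a genuine free parameter.

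Next I would pin down the $2$-adic expansion of $m$. From $7m=2(2^{t-1}-1)$ together with $2^{t-1}\equiv 1\pmod 7$ (equivalently $t\equiv 1\pmod 3$), one finds $m=\sum_{j\ge 0}2^{3j+1}$, i.e. $m$ has a $1$ in exactly the binary positions congruent to $1$ modulo $3$, up to position $t-3$. Since $t>7$ makes $m$ large (already $m=146$ when $t=10$), for every small offset $\ell$ the bound $7(m+\ell)<2(q-1)$ holds, so only the single term $x^{q-1}$—and not $x^{2(q-1)},\dots$—reaches exponent $q-1$ after reduction modulo $x^q-x$, exactly as in Lemma~\ref{lemm3.1}. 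For a short list of offsets $\ell$ I would then compute
$$E_\ell:=[x^{q-1}]f(x)^{m+\ell}=\sum\binom{m+\ell}{j_1,\dots,j_6}a^{j_2}b^{j_3}c^{j_4}d^{j_5}e^{j_6},$$
the sum running over $j_1+\cdots+j_6=m+\ell$ with $2j_2+3j_3+4j_4+5j_5+6j_6=7\ell-1$, each coefficient read modulo $2$ via Proposition~\ref{prop2.5}. The offsets are chosen so that $7\ell-1$ is small and the ``no-carry'' condition is transparent; these are exactly the quantities the displayed Maple routine produces, and Hermite--Dickson forces $E_\ell=0$ for each.

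Then I would solve the system. As in Theorem~\ref{thm4.3}, I would use the lowest relation to eliminate $d$, substitute into the rest, and factor the resulting polynomials in $e$ over $\Ff_2(a,b,c)$; each irreducible factor that is set to zero defines a branch. In favorable branches the coefficients are forced to specific values, giving $f$ equivalent to $x^7+x^5+x$ or to $x^7$ (note $q-1\equiv 1\pmod 7$, so $\gcd(7,q-1)=1$ and $x^7$ is genuinely a \PP). In each remaining branch I would produce a second root of $f$: either by exhibiting two explicit roots as in Theorem~\ref{thm4.2}, or by splitting off a low-degree factor and counting its roots with a trace argument of the kind used in Lemma~\ref{oneroot} (where the condition $\Tr(b)=0$ of Lemma~\ref{lemm3.4} governs solvability of the auxiliary quadratics). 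Any extra root contradicts condition (1) of Hermite--Dickson, eliminating the branch.

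The hard part will be this elimination and factorization. With four genuine parameters $b,c,d,e$ (rather than the effectively two that remained in the degree-$6$ odd case after the shift killed $b$), the relations $E_\ell$ are of high degree and the case tree is correspondingly larger; organizing it so that every leaf terminates either in one of the two normal forms or in a provable extra root is the crux, and it essentially has to be managed by computer algebra. I also expect to need a dedicated degree-$7$ analogue of Lemma~\ref{oneroot}—a trace-based root count for the specific quintic or sextic factors that appear—to close the residual cases cleanly.
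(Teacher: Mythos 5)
Your setup is the same as the paper's: normalize to $f(x)=x^7+ax^5+bx^4+cx^3+dx^2+ex$ with $a\in\{0,1\}$, note that the $x^4$-coefficient cannot be shifted away, compute the $2$-adic expansion $m=2^1+2^4+\cdots+2^{t-3}$, observe that $m\ge 146$ rules out wrap-around past $x^{q-1}$, and extract Hermite--Dickson relations $[x^{7m+1}]f(x)^{m+\ell}=0$ via Proposition~\ref{prop2.5}. All of that is correct and matches the paper. But the proof stops there: everything that actually establishes the lemma --- which exponents $\ell$ to use, what the resulting relations are, and how they force the two normal forms --- is deferred to ``computer algebra'' and never carried out. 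The choice of exponents is not a routine detail; it is the entire content of the argument, and without it the proposal is a strategy, not a proof.

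Your forecast of how the endgame goes is also off in a way worth noting. You expect a large case tree requiring elimination of $d$, factorization over $\Ff_2(a,b,c)$, and a dedicated degree-$7$ analogue of Lemma~\ref{oneroot} to kill residual branches by exhibiting a second root. None of that is needed. The paper uses $\ell\in\{1,3,11,13,19\}$; the relation for $\ell=19$ is simply $d^{33}=0$ (so the $x^3$-coefficient vanishes), the relation for $\ell=3$ is $e^4+d^5=0$ (so the $x^2$-coefficient vanishes too), and after splitting on whether the $x^5$-coefficient is $0$ or $1$, the relations for $\ell=11$ and $\ell=13$ combined with $\ell=1$ force the remaining coefficients directly, yielding $x^7+x^5+x$ and $x^7$ with no root-counting or trace argument at all. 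So the approach is right, but the proof is incomplete, and the anticipated auxiliary machinery is a misdiagnosis of where the difficulty lies: the real work is in selecting exponents whose multinomial expansions are sparse enough (given the $2$-adic structure of $m$) to produce such degenerate relations.
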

\begin{proof}
We proceed as in Section~\ref{degree6}.
We set $q=7m+2$, with
$$m=2^1+2^4+\cdots+2^{t-3}.$$
Suppose that $f(x)$ is a monic degree 7 \PP\ over $\Ff_q$. By
Lemma~\ref{shift}, we may normalize $f(x)$ such that
$$f(x)=x^7+bx^5+cx^4+dx^3+ex^2+fx\in\Ff_q[x].$$
Then we have the following conditions:
\begin{eqnarray}
[x^{7m+1}]f(x)^{m+1}
&=&c^2+b^3+f=0;\label{G1}\\
\ [x^{7m+1}]f(x)^{m+3}
&=& e^4+d^5=0;\label{G3}\\
\ [x^{7m+1}]f(x)^{m+11}
&=&(c^{24}+b^{16}e^8)d+(d^{16}+c^{16}b^8+b^{16}d^8)(c^4+b^4d)
\nonumber\\&&
+\ b^{16}c^8(e^4+d^5)+(c^{16}+b^{24}+f^8)f^4d;\label{G11}\\
\ [x^{7m+1}]f(x)^{m+13}
&=& (d^{16}c^8+c^{16}e^8)b+
\Big(e^{16}+d^{16}b^8+c^{16}d^8+b^{16}f^8\Big)
\nonumber\\
&&\ \cdot\Big(b^5+e^2+d^2b+c^2d+b^2f\Big)
+\Big(c^{24}+b^{16}e^8\Big)\cdot
\nonumber\\
&&\Big(d^4b+c^4(c^2+b^3+f)+\nonumber\\&&
b^4(e^2+d^2b+c^2d+b^2f)+f^3\Big)+
\nonumber\\&&
(d^{16}+c^{16}b^8+b^{16}d^8)\Big(f^4b+e^4(c^2+b^3+f)+
\nonumber\\
&&d^4(e^2+d^2b+c^2d+b^2f)+c^4(f^2b+e^2d+d^2f)\nonumber\\&&
+\ b^2f^3\Big)+b^{16}c^8\Big(f^4(e^2+d^2b+c^2d+b^2f)+
\nonumber\\&&
e^4(f^2b+e^2d+d^2f)+d^4f^3\Big)+\nonumber\\
&&(c^{16}+b^{24}+f^8)f^7;\label{G13}\\
\ [x^{7m+1}]f(x)^{m+19}
&=&d^{33}=0.\label{G19}
\end{eqnarray}
Combining (\ref{G3}) and (\ref{G19}), we have $d=e=0$.  By the
comments immediately before the statement of Theorem~\ref{lucas} in
Section 2, we may assume that $b\in\{0,1\}$. First we assume that
$b=1$. Then (\ref{G11}) reduces to $c^{20}=0$ and (\ref{G1}) gives
us $f=1$. The \PP\ is
$$x^7+x^5+x.$$
Next we assume that $b=0$.  Then (\ref{G13}) reduces to
$c^{24}f^3=0$ (using (\ref{G1}) to eliminate the last term and one
other term), which combined with (\ref{G1}) gives us $c=f=0$. The
\PP\ is $x^7$. The proof is complete.
\end{proof}

\begin{Lemma}
Let $t>5$ be such that $2^t\equiv 4\ (\mathrm{mod}\ 7)$. Then every
permutation polynomial over $\Ff_{2^t}$ of degree $7$ is equivalent
either $x^7+x^5+x$ or $x^7$.
\end{Lemma}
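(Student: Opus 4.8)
The plan is to follow the method of Section~\ref{degree6} and of the preceding lemma almost verbatim, only with $q=7m+4$ in place of $7m+2$. Since $2^t\equiv 4 \pmod 7$ forces $t\equiv 2\pmod 3$, and $t>5$, we have $t\geq 8$; writing $q=2^t=7m+4$ gives the $2$-adic expansion
$$m=2^2+2^5+2^8+\cdots+2^{t-3}.$$
As in Lemma~\ref{lemm3.1}, for the small exponents $m+i$ that we shall use one checks $7(m+i)<2(q-1)$, so that only the single term $x^{q-1}$ (and not $x^{2(q-1)},x^{3(q-1)},\dots$) contributes to $[x^{q-1}]f(x)^{m+i}$. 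After normalizing via Lemma~\ref{shift} we may take
$$f(x)=x^7+b x^5+c x^4+d x^3+e x^2+f x,$$
and the Hermite--Dickson criterion then demands $[x^{q-1}]f(x)^{m+i}=0$ for every odd $i$ with $m+i\leq q-2$ (odd $i$ since $m$ is even). I would compute these coefficients from the multinomial theorem together with Proposition~\ref{prop2.5}, exactly as in Lemma~\ref{lemm3.2}, using the displayed Maple routine with \texttt{deg}$=7$, \texttt{r}$=4$, and $m$ reduced modulo a high power of $2$.

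The first condition is the decisive simplification. Taking $i=1$, the degree-deficit equation $2a_5+3a_4+4a_3+5a_2+6a_1=4$ (where $a_j$ counts the factors contributing $x^j$) has only the two solutions corresponding to the monomials $d$ and $b^2$; the multinomial coefficient attached to $b^2$ is even, because adding $2$ to $m-1$ produces a carry in the binary digit of weight $2^1$ (the lowest set bit of $m$ is $2^2$, so $m-1$ already carries a $1$ in that digit), and hence it vanishes modulo $2$ by Proposition~\ref{prop2.5}. Thus $[x^{q-1}]f(x)^{m+1}=d$, and the criterion gives $d=0$ at once. I would then take one or two further low odd exponents, substitute $d=0$, and eliminate among the resulting identities to conclude $e=0$; with $d=e=0$ the polynomial reduces to $f(x)=x^7+b x^5+c x^4+f x$, and the residual scaling freedom $x\mapsto\lambda x$ lets us normalize $b\in\{0,1\}$.

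It remains to split into two cases, mirroring the preceding lemma. When $b=1$, the remaining higher conditions (the analogues of (\ref{G11}) and (\ref{G1})) should collapse to $c=0$ and $f=1$, producing $x^7+x^5+x$. When $b=0$, the analogue of (\ref{G13}) together with the first condition should force $c=f=0$, producing $x^7$. In either case $f$ is equivalent to one of the two asserted polynomials.

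The main obstacle is computational rather than conceptual: correctly evaluating the higher coefficients $[x^{q-1}]f(x)^{m+i}$ modulo $2$ — that is, carrying out the carry analysis of Proposition~\ref{prop2.5} for the relevant $a_j$ — and then performing the polynomial elimination among the resulting identities, in particular the bulky $(m+13)$-type expression analogous to (\ref{G13}). The two delicate points are obtaining $e=0$, which (unlike $d=0$) does not drop out of a single low exponent and must be extracted by combining conditions after setting $d=0$, and ensuring that the lower bound on $m$ coming from $t\geq 8$ is large enough to exclude contributions from $x^{2(q-1)}$ and higher for every exponent used; since large exponents (e.g.\ of order $m+40$) may force the single-term estimate to fail for the very smallest field, the few small cases not covered by the uniform argument (notably $t=8$) would be dispatched directly by computer.
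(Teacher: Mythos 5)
Your overall strategy is exactly the paper's: normalize $f$, compute $[x^{q-1}]f(x)^{m+i}$ for a few small odd $i$ via the multinomial theorem and Proposition~\ref{prop2.5}, and eliminate. Your derivation of the first condition is correct in detail: for $n=m+1$ the only candidate monomials are $d$ and $b^2$, and the carry at the digit $2^1$ (since $m-1$ has binary expansion $1+2+2^5+\cdots$ while the summand $2$ also occupies that digit) kills the $b^2$ term, so $d=0$; this is the paper's condition (\ref{H1}).

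The one step that would fail as described is your claim that, after substituting $d=0$, ``one or two further low odd exponents'' can be combined to force $e=0$ before the case split on $b$. That is how the $2^t\equiv 2$ case goes (there $e^4+d^5=0$ and $d^{33}=0$ give $d=e=0$ immediately), but it does not carry over here: with $d=0$ the relevant low-exponent conditions are $c^4(c^2+b^3+f)+b^4(e^2+b^2f)+f^3=0$ (from $n=m+3$) and $f^8c^4+e^{12}=0$ (from $n=m+9$), neither of which isolates $e$. The paper instead pairs $d=0$ with the $n=m+15$ condition $(c^{32}+b^{48}+f^{32})(c^2+b^3+f)=0$, whose factorization yields the key relation $f=c^2+b^3$; only after the case split on $b\in\{0,1\}$ and the use of the $n=m+19$ condition to pin down $c$ and $f$ does the $n=m+9$ condition finally collapse to $e^{12}=0$. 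So $e=0$ is the last variable eliminated, not an early one, and without the $(m+15)$ factorization your elimination would not close. The rest of your outline (normalizing $b\in\{0,1\}$, the final dichotomy, and checking that higher multiples of $q-1$ do not contribute for the exponents used) matches the paper.
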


\begin{proof}
We  set $q=7m+4$ with
$$m=2^2+2^5+\cdots+2^{t-3}.$$
Suppose that $f(x)$ is a monic degree 7 \PP\ over $\Ff_q$. By
Lemma~\ref{shift}, we may normalize $f(x)$ such that
$$f(x)=x^7+bx^5+cx^4+dx^3+ex^2+fx\in\Ff_q[x].$$
This time we use the following relations:
\begin{eqnarray}
[x^{7m+3}]f(x)^{m+1}
&=&d=0;\label{H1}\\
\ [x^{7m+3}]f(x)^{m+3}
&=&d^4b+c^4(c^2+b^3+f)+
\nonumber\\&&
b^4(e^2+d^2b+c^2d+b^2f)+ f^3=0;\label{H3}\\
\ [x^{7m+3}]f(x)^{m+9}
&=&f^8c^4+e^{12}+(f^8b^4+e^8d^4+d^8f^4)d=0;\label{H9}\\
\ [x^{7m+3}]f(x)^{m+15}
&=&(c^{32}+b^{48}+f^{32})(c^2+b^3+f)=0;\label{H15}\\
\ [x^{7m+3}]f(x)^{m+19}
&=&(d^{32}+c^{32}b^{16}+b^{32}d^{16})b+b^{32}c^{16}\Big(d^4b+
\nonumber\\&&c^4(c^2+b^3+f)+b^4(e^2+d^2b+c^2d+b^2f)+f^3\Big)
\nonumber\\&&
+\ \Big(c^{32}+b^{48}+f^{16}\Big)\Big(f^4(e^2+d^2b+c^2d+b^2f)+
\nonumber\\&&
e^4(f^2b+e^2d+d^2f)+d^4f^3\Big)=0;\label{H19}
\end{eqnarray}
From (\ref{H1}) and (\ref{H15}) we get $d=0$ and $f=c^2+b^3$.  By
the comments immediately before the statement of Theorem~\ref{lucas}
in Section 2, we may assume that $b\in\{0,1\}$.  First assume that
$b=1$. Substituting into (\ref{H3}) and (\ref{H19}) we get
$e^2+f+f^3=0$ and $c^{32}+e^2+f+f^3=0$, or $c=0$.  We have $f=1$,
and (\ref{H9}) reduces to $e^{12}=0$.  The \PP\ is $x^7+x^5+x$. Now
assume $b=0$. Then (\ref{H3}) reduces to $f^3=0$, and we have
$b=c=d=f=0$. Substituting into (\ref{H9}) gives $e^{12}$ and the
\PP\ is $x^7$. The proof is now complete.
\end{proof}

Combining the lemmas and computer results for $\Ff_{16},\ \Ff_{32},$ and
$\Ff_{128}$, we have:
\begin{Theorem}
Let $2^t\ge8$.  If $t\equiv0\ (\mathrm{mod}\ 3)$, then there are no
permutation polynomials over $\Ff_{2^t}$ of degree $7$.  Otherwise,
every permutation polynomial over $\Ff_{2^t}$ of degree $7$ is
equivalent to $x^7+x^5+x$, or to $x^7$, or $t=4$, and the polynomial
is equivalent to one of the following:
\begin{eqnarray*}
&x^7+a^3x^4+a^6x&\\
&x^7+x^5+x^47\\
&x^7+x^5+ax^4+a^{14}x^3+a^{12}x^2+a^8x&\\
&x^7+x^5+a^5x^4+a^2x^3+a^{12}x^2+a^5x&\\
&x^7+x^5+a^7x^4+a^5x^2+a^3x,&
\end{eqnarray*}
where $a$ is a root of $x^4+x+1\in\Ff_2[x]$ in some extension field
of $\Ff_2$.
\end{Theorem}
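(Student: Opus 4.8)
The plan is to assemble the final classification theorem by combining the three preceding lemmas (which handle the generic cases $2^t \equiv 2, 4 \pmod 7$ for large $t$) with the already-established observation that $t \equiv 0 \pmod 3$ forces $7 \mid 2^t - 1$, hence no degree-$7$ \PP{} exists by the trivial lemma. The only remaining work is the bookkeeping of small fields and the explicit $t=4$ list. First I would observe that $\gcd(7, 2^t - 1) > 1$ exactly when $7 \mid 2^t - 1$, which happens iff $t \equiv 0 \pmod 3$ (since the multiplicative order of $2$ modulo $7$ is $3$); in that case the first lemma of this section applies after noting $2^t \equiv 1 \pmod 7$, giving the ``no \PP{}'' conclusion. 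When $t \not\equiv 0 \pmod 3$, we have $2^t \equiv 2$ or $4 \pmod 7$, so $\gcd(7, 2^t-1)=1$ and $x^7$ is automatically a \PP{} of $\Ff_{2^t}$.

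For the three residue classes I would invoke the respective lemmas directly, but each carries a size hypothesis ($t > 7$, $t > 5$, $t > 7$ respectively, reading off the stated bounds on $m$). The strategy is therefore to verify the theorem for all fields $\Ff_{2^t}$ below these thresholds by explicit computation, namely $\Ff_{16}$, $\Ff_{32}$, and $\Ff_{128}$ (the cases $\Ff_8$ with $t=3$ and any $t \equiv 0 \pmod 3$ being covered by the trivial lemma, and $\Ff_4$ being too small to carry a degree-$7$ \PP{} in the relevant sense). A finite search over normalized polynomials, reduced modulo the equivalence relation of the Definition, produces the finite lists; in particular the field $\Ff_{16}$ (the case $t=4$, which satisfies $4 \equiv 1 \pmod 3$, so $2^4 \equiv 4 \pmod 7$ but $t=4 < 5$ is excluded from the corresponding lemma) is the exceptional case yielding the five extra polynomials written in terms of a root $a$ of $x^4 + x + 1$.

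I would then stitch the pieces together: for each $t$ with $2^t \ge 8$, either $t \equiv 0 \pmod 3$ (no \PP{}, by the trivial lemma) or $t \not\equiv 0 \pmod 3$, in which case $t$ falls into one of the two nontrivial residue classes mod $7$. If $t$ exceeds the relevant lemma's threshold, the lemma gives equivalence to $x^7+x^5+x$ or $x^7$; if $t$ lies below the threshold, it is one of the finitely many small fields already checked. Since every small field except $\Ff_{16}$ yields only the two standard polynomials, and $\Ff_{16}$ yields those two together with the five listed exceptions, the statement follows. The main obstacle is ensuring the case analysis is exhaustive and that the size thresholds of the three lemmas, together with the explicitly computed small cases, genuinely leave no gap: one must confirm that every field with $2^t \ge 8$ and $t \not\equiv 0 \pmod 3$ is covered either by a lemma or by the finite computation, which requires carefully matching each residue class modulo $7$ against its threshold and its list of sub-threshold exceptions.
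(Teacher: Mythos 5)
Your proposal is correct and follows essentially the same route as the paper, whose entire proof is the single line ``Combining the lemmas and computer results for $\Ff_{16}$, $\Ff_{32}$, and $\Ff_{128}$'': the trivial lemma disposes of $t\equiv 0\pmod 3$ (i.e.\ $2^t\equiv 1\pmod 7$), the two residue-class lemmas handle large $t$, and the sub-threshold fields $\Ff_{16}$, $\Ff_{32}$, $\Ff_{128}$ are finished by computer, with $\Ff_{16}$ producing the exceptional list. (Only a harmless slip: $2^4\equiv 2$, not $4\pmod 7$, so $t=4$ falls under the $t>7$ lemma's threshold rather than the $t>5$ one; either way it must be checked by machine.)
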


\begin{rem}
The methods that we used for classifying \PP s of $\Ff_{2^t}$ of
degree 6 or 7 will not work when the degree of the \PP\ is a power
of the characteristic. In other cases, for fixed degree and fixed
characteristic, the methods can be expected to work, although the
number of terms to deal with is likely to increase rapidly with the
characteristic.
\end{rem}





\end{document}